\renewcommand{\leq}{\leqslant}
\renewcommand{\geq}{\geqslant}
\renewcommand{\le}{\leqslant}
\renewcommand{\ge}{\geqslant}
\newcommand{\rr}{{\mathbb{R}}} 
\newcommand{\ag}{{\mathfrak{g}}}
\newcommand{\hhh}{{\mathcal{H}}}
\newcommand{\la}{\lambda}
\newcommand{\Om}{\Omega}
\newcommand{\escpr}[1]{\langle#1\rangle}
\newcommand{\norm}[1]{|| #1 ||}
\DeclareMathOperator{\divv}{div}
\DeclareMathOperator{\intt}{int}
\newtheorem{theorem}{Theorem}[section]
\newtheorem{proposition}[theorem]{Proposition}
\newtheorem{lemma}[theorem]{Lemma}
\newtheorem{corollary}[theorem]{Corollary}
\theoremstyle{definition}
\newtheorem{remark}[theorem]{Remark}
\theoremstyle{remark}
\numberwithin{equation}{section}
\begin{document}

\title{Existence of isoperimetric regions in sub-Finsler nilpotent groups}

\author[J.~Pozuelo]{Juli\'an Pozuelo} 

\address{Departamento de Geometr\'{\i}a y Topolog\'{\i}a \\
	Universidad de Granada \\ E--18071 Granada \\ Espa\~na}

\email{pozuelo@ugr.es}

\date{\today}

\thanks{The author would like to thank his Ph.D. advisor Manuel Ritoré for sugesting the problem and his help.}

\thanks{The author has been supported by MEC-Feder
	grant MTM2017-84851-C2-1-P, Junta de Andalucía grant A-FQM-441-UGR18, MSCA GHAIA, and Research Unit MNat UCE-PP2017-3} 
\subjclass[2000]{53C17, 49Q20}
\keywords{Isoperimetric regions, Nilpotent Lie groups, Sub-Riemannian geometry}

\begin{abstract}
 We consider a nilpotent Lie group with a bracket-generating distribution $\hhh$ and an asymmetric left-invariant norm $\|\cdot\|_K$ induced by a convex body $K\subseteq\hhh$ containing $0$ in its interior. In this paper we prove the existence of minimizers of the perimeter functional $P_K$ associated to $\|\cdot\|_K$ under a volume (Haar measure) constraint. Our result generalizes the one of Leonardi and Rigot for Carnot groups.
\end{abstract}

\maketitle

\thispagestyle{empty}

\bibliographystyle{abbrv} %\nocite{*}

% \tableofcontents

\section{Introduction}

A well-known problem in Euclidean space is the isoperimetric problem of minimizing perimeter under a volume constraint. To consider this problem, we need a notion of volume $|E|$ of an open set  $E\subseteq \rr^d$, the Lebesgue measure,  and a notion of perimeter, given by
\[
P(E)=\sup\bigg\{\int_E\divv U\,dx: U\in\mathfrak{X}_0^1(\rr^d), \norm{U}_{\infty}\le 1\bigg\}.
\]
When $P(E)<+\infty$, $E$ is said to be a finite perimeter set. It is natural to ask for existence of sets $B\subseteq \rr^d$ such that
\[
P(E)\geq P(B),
\]
for all finite perimeter sets $E$ with $|E|=|B|$.
%\[
%P(E)\geq P(B)
%\]
%for all finite perimeter set $E\subseteq\rr^d$ with $|E|=|B|$.
%
% where the minimizers of the perimeter among all measurable sets with prescribed Lebesgue measure are Euclidean balls. 

Another notion of perimeter is the Finsler perimeter: taking an asymmetric norm in $\rr^d$, $\|\cdot\|_K$, we can consider the perimeter as
\[
P_K(E)=\sup\bigg\{\int_E\divv U\,dx: U\in\mathfrak{X}_0^1(\rr^d), \norm{U}_{K,\infty}\le 1\bigg\}.
\]
The sets minimizing the perimeter for its volume are usually referred to as the \emph{Wulff shapes} and were described by the crystallographer G. Wulff in 1895. A first mathematical proof was given by Dinghas \cite{zbMATH03045073}. Other versions of Wulff's results were given by Busemann \cite{MR31762}, Taylor \cite{MR493671}, Fonseca \cite{MR1116536} and Fonseca and Müller  \cite{MR1130601}; see also Gardner \cite{MR1898210} and Burago and Zalgaller \cite{MR936419}.
%An asymmetric norm in $\rr^d$ is uniquely determined by a convex body (compact set with non-empty interior) $K\subset V_0$ containing $0$ in its interior. We write $\norm{\cdot}_K$ to indicate the dependence of the norm on $K$. The norm associated to the closed unit disc $D$ centered at $0$ coincides with the Euclidean norm and  is denoted by $\|\cdot \|_D$.
%  Symmetric sub-Finsler structures in the first Heisenberg group $\hh^1$ have received intense interest recently, specially the study of geodesics \cite{MR3657277,MR3908389}. General asymmetric sub-Finsler structures have an associated asymmetric distance and might have different metric properties, see \cite{MR3108873,MR3210893} and \cite{MR2977440}.

In the recent years there has been an increase interest in the isoperimetric problem in the context on metric measure spaces. The privilege position of Carnot groups within geometric measure theory is revealed by the characterization as the only metric spaces that are
\begin{enumerate}
	\item locally compact,
	\item geodesic,
	\item isometrically homogeneous, and
	\item self-similar (i.e. admitting a dilation).
\end{enumerate}
This characterization can be seen as Theorem 1.1 in \cite{MR3283670}. Removing the self-similarity condition, sub-Finsler nilpotent groups acquire relevant importance.
%In particular, in sub-Riemannian geometry we can consider Hausdorff measures or the Popp's measure, and the perimeter related to a family of vector fields satisfying a Hörmander condition. 
In \cite{MR676380}, Pansu proved an isoperimetric inequality in the Heisenberg group and conjectured that some spheres are the isoperimetric regions. In \cite{MR1404326}, Garofalo and Nhieu proved an isoperimetric inequality in $\rr^n$ with a family of vector fields satisfying a Hörmander condition and supposing a doubling property, a Poincaré inequality and that $\rr^n$ with the sub-Riemannian distance is a complete length-space.   There is a number of partial results on the sub-Riemannian isoperimetric problem in the Heisenberg group, that can be found in \cite{MR2435652,MR2898770,MR2402213,MR2548252}.

Considering an asymmetric left-invariant norm in the horizontal distribution of the Heisenberg group, we obtain the sub-Finsler Heisenberg group. In this group the Minkowski content is considered in \cite{snchez2017subfinsler}, while two partial results are generalized in  \cite{monti-finsler,ritore-pozuelo}.  

For a different notion of perimeter of a submanifold of fixed degree immersed in a graded manifold that depends on the degree, see \cite{ritore-gimmy} (see also \cite{MR2414951}). 

In Riemannian geometry, a very general result of existence was given by Morgan in \cite{Morgan}, when a subgroup of the isometry group of $M$ acts cocompactly on $M$. Different results are obtained in \cite{MR3514556,MR3215337,isop}.

In contact sub-Riemannian manifolds, Galli and Ritoré  proved in \cite{MR2979606} an analog result to the Morgan's existence result. For that, they followed Morgan's structure:  they pick a minimizing sequence of sets of volume $v$ whose perimeters approach the infimum of the perimeters of sets of volume $v$. This sequence can be splitted into two subsequences. The first subsequence is converging to a set, and it is proved that is isoperimetric for its volume and bounded. Nevertheless, it might be a loss of mass at infinity. In this case, they use isometries to translate the second subsequence, which is diverging, to recover some of the lost volume. An essential point is that they always recover a fixed fraction of the volume.

Leonardi and Rigot proved  in \cite{MR2000099} existence of isoperimetric regions in Carnot groups, that is, nilpotent Lie group which is equipped with a family of dilations, together with the properties of Ahlfors-regularity and Condition B of its boundary. The dilations plays a key role, since they allow to prove that the isoperimetric profile is a concave function, and that there is no loss of mass at infinity.

In this paper we give a proof of existence of isoperimetric regions for any volume in a nilpotent group with a set of left-invariant vector fields $X$ satisfying a Hörmander condition, that is, the Lie bracket generating condition, and an asymmetric left-invariant norm $\|\cdot\|_K$,  without the assumption of been equipped with a family of dilations (Theorem \ref{thm:existence}). We shall adapt the arguments of Galli and Ritoré \cite{MR2979606} to prove existence of isoperimetric regions (see also \cite{leonardi-ritore}). As in \cite{MR2979606}, the main difficulty is to prove a Deformation Lemma (Lemma \ref{lem:def}) which will allow us to increase the volume of any finite perimeter set while modifying the perimeter in a controlled way. Moreover, we obtain that the isoperimetric profile is non-decreasing (Proposition \ref{lem:non}) and, from an uniform version of the Deformation Lemma and the existence of isoperimetric regions, we will deduce that the isoperimetric profile is locally Lipschitz and sub-additive in in Proposition \ref{prop:cont} and Corollary \ref{cor:sub-add} respectively. We shall also extend the properties obtained in Carnot groups by Leonardi and Rigot in \cite{MR2000099}, that isoperimetric regions are bounded and its topological boundary and the essential boundary coincide.

This paper is organized as follows. In Section 2, we establish notation and give some background on sub-Finsler nilpotent groups and the notion of $K$-perimeter.
In Section 3, we study some properties of the isoperimetric regions such as that they are open up to a nullset (Corollary \ref{cor:open}), bounded (Theorem \ref{thm:bound}) and its essential and topological boundaries coincide (Theorem \ref{thm:boundary}), and prove in Proposition \ref{lem:non} that the isoperimetric profile is non-decreasing. Moreover, we prove a Deformation Lemma, Lemma \ref{lem:def}, for a finite $K$-perimeter set in a nilpotent group. In Section 4, we prove in Theorem \ref{thm:existence} the main result, the existence of isoperimetric regions and deduce that the isoperimetric profile is a locally Lipschitz and sub-additive function in Proposition \ref{prop:cont} and Corollary \ref{cor:sub-add}.

\section{Preliminaries}

\subsection{Nilpotent groups}

We recall some results on nilpotent groups. For a quite complete description of nilpotent Lie groups the reader is referred to Section 1.13 in \cite{MR1920389}.

Let $\ag$ be a Lie algebra. We define recursively $\ag_0=\ag$, $\ag_{i+1}=[\ag,\ag_i]=\mbox{span}\{[X,Y] : X\in\ag,Y\in\ag_i \}$. The decreasing series		
$$\ag=\ag_0\supseteq\ag_1\supseteq\ag_2\supseteq\ldots$$
is called the \emph{lower central series of $\ag$}. If $\ag_r=0$ and $\ag_{r-1}\neq0$ for some $r$, we say that $\ag$ is \emph{nilpotent} of step $r$. A connected and simply connected Lie group is said to be \emph{nilpotent} if its Lie algebra is nilpotent.

The following Lemma will be used in the sequel.
\begin{lemma}\label{seq}
	Let $\ag$ be a nilpotent Lie algebra. Then there exists a basis $\{Y_1,\ldots,Y_d\}$ of $\ag$ such that 
	\begin{enumerate}
		\item for each $1\leq n\leq d$, $\mathfrak{h}_n=\mbox{span}\{Y_{d-n+1},\ldots,Y_d\}$ is an ideal of $\ag$,
		\item  for each $0\leq i\leq r-1$, $\mathfrak{h}_{n_i}=\ag_i$.
	\end{enumerate}	
\end{lemma}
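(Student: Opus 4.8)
The plan is to construct the basis by refining the lower central series $\ag=\ag_0\supseteq\ag_1\supseteq\cdots\supseteq\ag_{r-1}\supseteq\ag_r=0$ into a complete flag of ideals of $\ag$. First I would record that each $\ag_i$ is an ideal, since $[\ag,\ag_i]=\ag_{i+1}\subseteq\ag_i$. Reversing the inclusions gives an increasing chain of ideals $0=\ag_r\subseteq\ag_{r-1}\subseteq\cdots\subseteq\ag_1\subseteq\ag_0=\ag$, and writing $n_i=\dim\ag_i$ we have $0=n_r\leq n_{r-1}\leq\cdots\leq n_0=d$. The goal then becomes to interpolate this chain by ideals whose dimensions grow by exactly one at each step, in such a way that every $\ag_i$ occurs as one of the terms.

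The crucial ingredient is the standard fact that in a nilpotent Lie algebra every nonzero ideal meets the center $Z(\ag)$ nontrivially. I would prove this by a descending argument: given a nonzero ideal $\mathfrak{a}$, set $\mathfrak{a}^{(0)}=\mathfrak{a}$ and $\mathfrak{a}^{(k+1)}=[\ag,\mathfrak{a}^{(k)}]$. An easy induction gives $\mathfrak{a}^{(k)}\subseteq\ag_k$, so $\mathfrak{a}^{(r)}\subseteq\ag_r=0$; moreover each $\mathfrak{a}^{(k)}\subseteq\mathfrak{a}$ because $\mathfrak{a}$ is an ideal. Taking the largest $m$ with $\mathfrak{a}^{(m)}\neq 0$, the relation $[\ag,\mathfrak{a}^{(m)}]=\mathfrak{a}^{(m+1)}=0$ forces $0\neq\mathfrak{a}^{(m)}\subseteq\mathfrak{a}\cap Z(\ag)$.

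With this in hand I would establish the one-step refinement: if $\mathfrak{a}\subsetneq\mathfrak{b}$ are ideals of $\ag$, there is an ideal $\mathfrak{c}$ with $\mathfrak{a}\subsetneq\mathfrak{c}\subseteq\mathfrak{b}$ and $\dim\mathfrak{c}=\dim\mathfrak{a}+1$. To see this I pass to the quotient $\ag/\mathfrak{a}$, which is again nilpotent and in which $\mathfrak{b}/\mathfrak{a}$ is a nonzero ideal; by the previous fact it contains a nonzero element $\bar v$ of $Z(\ag/\mathfrak{a})$. The line $\rr\bar v$ is then an ideal of $\ag/\mathfrak{a}$ contained in $\mathfrak{b}/\mathfrak{a}$, and its preimage $\mathfrak{c}$ under the projection $\ag\to\ag/\mathfrak{a}$ is the sought ideal.

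Applying this refinement repeatedly inside each gap $\ag_{i+1}\subsetneq\ag_i$ produces a complete flag of ideals $0=V_0\subset V_1\subset\cdots\subset V_d=\ag$ with $\dim V_n=n$, every $V_n$ an ideal of $\ag$, and $V_{n_i}=\ag_i$ for each $i$ (since at dimension $n_i$ we have $V_{n_i}\subseteq\ag_i$ with equal dimension). Finally I choose, for each $n=1,\ldots,d$, a vector $Y_{d-n+1}\in V_n\setminus V_{n-1}$; then $\{Y_{d-n+1},\ldots,Y_d\}$ is a basis of $V_n=\mathfrak{h}_n$, so $\{Y_1,\ldots,Y_d\}$ is a basis of $\ag$ satisfying $(1)$, and $\mathfrak{h}_{n_i}=V_{n_i}=\ag_i$ yields $(2)$. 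The main obstacle is the center-meeting lemma, which is precisely where nilpotency enters and without which the one-dimensional refinements would fail to be ideals.
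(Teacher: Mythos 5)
Your argument is correct: the reduction to the fact that every nonzero ideal of a nilpotent Lie algebra meets the center, and the resulting one-dimensional refinement of the lower central series into a complete flag of ideals, are all sound. The paper itself offers no proof of this lemma, deferring to \S~1.2 of \cite{MR1070979}, and your construction is essentially the one carried out there, so you have simply supplied the details the paper omits.
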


A basis verifying this is called a \emph{Malcev basis}. This construction is adapted from \S~1.2 in \cite{MR1070979}. An important feature of this basis is that $Y_d\in\ag_{r-1}\subseteq\text{Center}(\ag)$, that is, commutes with any $U$ in $\ag$. Fixed a Malcev basis, the exponential map provides a diffeomorphism between $\rr^d$ and $G$, given by the map 
$$x=(x_1,\ldots,x_d)\mapsto \exp(x_1Y_1+...+x_dY_d). $$
This result can be found as Theorem 1.127 in \cite{MR1920389}. The inverse of this map provides coordinates called \emph{canonical coordinates of the first kind}. The group product can be recovered by the Hausdorff-Campbell-Baker formula as
\begin{equation*}
(x_1,\ldots,x_d)\cdot (y_1,\ldots,y_d)=\exp^{-1}(X+Y+\frac{1}{2}[X,Y]+\frac{1}{12}[X,[X,Y]]+\ldots),
\end{equation*}
where $X=\sum x_iY_i$, $Y=\sum y_i Y_i$, $(x_1,\ldots,x_d)=\exp(X)$ and $(y_1,\ldots,y_d)=\exp(Y)$. In particular, 
\begin{equation}\label{eq:tras}
x\cdot (0,\ldots,0,y_d)=x+ (0,\ldots,0,y_d).
\end{equation}

 The Lebesgue measure of $\rr^d$ will be denoted as $|\cdot|$ and, as we can see Theorem 1.2.10 in \cite{MR1070979}, it coincides with the Haar measure on $\rr^d$ with this product.

 From now on, we shall denote a nilpotent group as $(\rr^d,\cdot)$.

Given a nilpotent group $(\rr^d,\cdot)$ and a system of linearly-independent left-invariant vector fields $X=\{X_1,\ldots, X_{k}\}$, we define the distributions
\begin{equation*}
%\begin{split}
\hhh^0:=\text{span}(X) \qquad \hhh^n:=\hhh^{n-1}+[\hhh^{n-1},\hhh^0]
%X^1&:=X^0+\text{spann}\{[U,V] : U,V\in X \}\\
%&...\\
%X^s&:=X^{s-1}+\text{spann}\{[U,V] : U,V\in X^{s-1} \}.
%\end{split}
\end{equation*}
 we say that $X$ is a \emph{Lie bracket generating system} if $\hhh^s(0)=\ag$ for some $s$. There is a left-invariant distance $d_X$ in $\rr^d$ associated to $X$ (see \cite{MR793239}). 
%we construct the chain
%\begin{equation*}\label{eq:chain}
%\{ 0 \}\subseteq X\subseteq X^2\subseteq\ldots\subseteq X^s\subseteq X^{s+1}=\ag,
%\end{equation*}
%where $X^i$ is the vector space spanned by taking $i$ or less Lie brackets of elements of $X$. Let $\{X_1,\ldots, X_{k_{i-1}},X_{k_{i-1}+1},\ldots ,X_{k_i} \}$ be a basis of $X^i$.
% We can refine the chain \eqref{eq:chain} as:
%\begin{equation*}
%\{ 0 \}\subseteq Y_1\subseteq\ldots\subseteq Y_{k_1}= X\subseteq Y_{k_1+1}\subseteq\ldots\subseteq Y_{k_2}=X^2\subseteq\ldots\subseteq  X^{s+1}=\ag,
%\end{equation*}	
%where $Y^i=\text{spann}\{X_1,\ldots,X_i \}$. This chain has the properties that $\dim(\frac{Y_i}{Y_{i+1}})=1$ and that each $Y_{i+1}$ is an ideal of $Y_i$.
%
% The \emph{horizontal distribution} $\mathcal{H}\subseteq T\rr^d$ is given by $\mathcal{H}_x=(d\ell _x)_0(X)$ where $X=\text{spann}\{X_1,\ldots X_k\}$ and $\ell_x$ is the left-translation by $x$, and has a basis at each point given by $\{X_1(x):=(d\ell_x)_0(X_1),\ldots,X_{n_1}(x):=(d\ell_x)_0(X_{k}) \}$, which is called the \emph{moving frame}. A vector field is said to be \emph{horizontal} if it is a linear combination of the vectors in the moving frame at each point. 
% We call \emph{horizontal distribution} to $\mathcal{H}\subseteq T\rr^d$ is given by $\mathcal{H}_x=\text{spann}\{X_1(x),\ldots,X_{k}(x) \}$. A vector field $U$ is said to be \emph{horizontal} if $U(x)\in\mathcal{H}_x$ for all $x$ in $\rr^d$.  
The distribution $\hhh^0$ is called the \emph{horizontal distribution}. A vector field $U$ is said to be \emph{horizontal} if $U(x)\in \hhh^0_x$ for all $x$ in $\rr^d$.  
 
  We consider a left-invariant Riemannian metric $g:=\langle\cdot,\cdot\rangle$ forming $X$ an orthonormal basis of $\hhh^0$, and making orthogonal the subbundles $\hhh^0$ and $V$, where $V$ is a complementary subbundle of $\hhh^0$.

Given a nilpotent group $(\rr^d,\cdot)$ and a Lie bracket generating system $X$, the \emph{dimension at infinity}, $D$, and the \emph{local dimension}, $l$, of $\rr^d$ and $X$, are defined as
\begin{equation*}
%\begin{split}
D=\sum i m_i, \qquad l=\sum i m'_i.
%\end{split}
\end{equation*}
where $m_i:=\dim(\ag_{i-1})-\dim(\ag_{i})$, $m'_i:=\dim(\hhh^{i-1})-\dim(\hhh^{i-2})$ and $m_1:=\dim(\hhh^0)$.

\begin{remark}	
	It is clear that $D\geq l$. Moreover, $D=l$ if and only if $(\rr^d,\cdot)$ is stratifiable (that is, $\ag$ admits a direct-sum decomposition $\ag=V_0\oplus v_1\ldots\oplus V_r$ such that $V_s\neq\{0\}$ and $[V_1,V_j] = V_{1+j}$, where $1<j<s$ and $V_{s+1}=\{0\}$) and $X$ generates a stratification on $\ag$.
\end{remark}
\begin{remark}
	 In a stratifiable group, not every subspace $V_0$ such that $\ag= V_0\oplus\ag_1$ generates a stratification. Moreover, not every nilpotent group is stratifiable (see Example 1.8 and 1.9 \cite{MR3742567}).
\end{remark}	

% From the results in Section IV.5 of \cite{MR1218884} we deduce that any nilpotent group $\rr^d$ with a Lie generating system $X$ is a doubling measure space, that is, there exists $C>0$ such that
%\begin{equation}\label{eq:doub}
%|B(x,2r)|<C |B(x,r)|<\infty,
%\end{equation}
%where $B(x,r)$ is the sub-Riemannian ball of radius $r>0$ centered at $x$. The \emph{doubling constant} is defined as the $\log_2$ of the infimum of the constants $C$ satisfying \eqref{eq:doub}. 
The following result can be seen in Section IV.5 of \cite{MR1218884}.
\begin{theorem}[Proposition IV.5.6 and Proposition IV.5.7 in \cite{MR1218884}]\label{thm:balls}
	Let $(\rr^d)$ be a nilpotent group with $X$ a bracket-generating system and $D$ and $l$ the dimension at infinity and the local dimension respectively. There exist positive constants $\alpha$ and $\beta$ such that
	\begin{equation}\label{eq:ball}
	\begin{split}	
		 \alpha^{-1}t^l\leq &| B(0,t)|\leq \alpha t^l \quad 0\leq t\leq 1,\\
		 \beta^{-1}t^D \leq&|B(0,t)|\leq \beta t^D \quad t>1.
	\end{split}
	\end{equation}
\end{theorem}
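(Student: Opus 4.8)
The statement splits into a local regime ($0\le t\le 1$, with exponent $l$) and a regime at infinity ($t>1$, with exponent $D$), and I would attack the two by genuinely different tools, reflecting the fact that $l$ is read off the Hörmander flag $\hhh^0\subseteq\hhh^1\subseteq\cdots$ while $D$ is read off the lower central series $\ag=\ag_0\supseteq\ag_1\supseteq\cdots$. The plan is therefore: prove the small-scale bound from the local (bracket-generating) structure, prove the large-scale bound from the global nilpotent structure via its associated graded group, and in each case transfer a box-type estimate to the Lebesgue measure using left-invariance of $|\cdot|$.

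For the local estimate I would invoke the ball-box theorem of Nagel--Stein--Wainger. Fixing at the origin a basis adapted to the flag $\{\hhh^i(0)\}$, I assign weight $i$ to a basis vector lying in $\hhh^{i-1}(0)$ but not in $\hhh^{i-2}(0)$, and I pass to the corresponding exponential coordinates. The ball-box theorem then traps the ball $B(0,t)$ between two coordinate boxes $\prod_j[-c\,t^{w_j},c\,t^{w_j}]$ and $\prod_j[-C\,t^{w_j},C\,t^{w_j}]$ for $0<t\le 1$, where $w_j$ is the weight of the $j$-th coordinate. Taking Lebesgue measure gives $\alpha^{-1}t^{\sum_j w_j}\le|B(0,t)|\le\alpha\,t^{\sum_j w_j}$, and since the number of coordinates of weight $i$ is exactly $m'_i=\dim\hhh^{i-1}-\dim\hhh^{i-2}$, one has $\sum_j w_j=\sum_i i\,m'_i=l$. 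This is the first pair of inequalities.

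For the estimate at infinity I would pass to the associated graded Lie algebra $\mathrm{gr}(\ag)=\bigoplus_i \ag_i/\ag_{i+1}$, a stratified (Carnot) Lie algebra whose homogeneous dimension is precisely $D=\sum_i i\,m_i$ with $m_i=\dim\ag_{i-1}-\dim\ag_i$. On the graded group genuine dilations exist, yielding the exact scaling $|B_\infty(0,t)|=t^D\,|B_\infty(0,1)|$. The content of the large-scale bound is that $(\rr^d,d_X)$ is asymptotically comparable to its graded model: by Pansu's asymptotic-cone theorem the blow-down of $(\rr^d,d_X)$ is the graded group with a Carnot--Carathéodory metric, and quantitatively one shows that for $t\ge 1$ the balls in the two metrics have comparable volumes with constants independent of $t$. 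Combining this comparison with the exact scaling on $\mathrm{gr}(\ag)$ produces the constant $\beta$ and the bound $\beta^{-1}t^D\le|B(0,t)|\le\beta\,t^D$ for $t>1$.

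I expect the estimate at infinity to be the main obstacle: the local bound is a direct application of a known ball-box theorem, whereas the large-scale one requires controlling \emph{uniformly in the radius} how the non-stratified group degenerates onto its graded model, so that the comparison constant does not deteriorate as $t\to\infty$. This is exactly where the lower central series and the Malcev basis of Lemma \ref{seq} enter, allowing one to track the weights that survive in the limit. An alternative route, probably closer to the cited Varopoulos--Saloff-Coste--Coulhon argument, avoids the asymptotic cone altogether: one discretizes the group by a lattice, applies the Bass--Guivarc'h formula which computes the growth degree of a finitely generated nilpotent group as $\sum_i i\,(\dim\ag_{i-1}-\dim\ag_i)=D$, and transfers the discrete growth estimate back to the continuous Carnot--Carathéodory balls. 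I would adopt whichever of the two gives cleaner uniform constants.
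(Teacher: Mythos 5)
The paper offers no proof of this statement: it is imported verbatim from Propositions IV.5.6 and IV.5.7 of Varopoulos--Saloff-Coste--Coulhon, so there is no in-paper argument to compare yours with. Judged on its own, the local half of your sketch is correct and is the standard one: the Nagel--Stein--Wainger ball-box theorem traps $B(0,t)$, $0<t\le 1$, between coordinate boxes with side lengths comparable to $t^{w_j}$, and the count $\sum_j w_j=\sum_i i\,m_i'=l$ is exactly right.

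The large-scale half is where the genuine gaps are, and both routes you propose have one. For the asymptotic-cone route: Gromov--Hausdorff convergence of the blow-downs $(\rr^d,t^{-1}d_X)$ to the graded Carnot group says nothing by itself about Haar measures of balls; you would need convergence in a measured sense together with two-sided bounds \emph{uniform over all} $t>1$, and your sentence ``quantitatively one shows that for $t\ge1$ the balls in the two metrics have comparable volumes with constants independent of $t$'' is precisely the assertion to be proved, so as written the argument assumes its conclusion. For the discretization route: a simply connected nilpotent Lie group admits a lattice if and only if its Lie algebra has a basis with rational structure constants (Malcev's criterion), so the lattice you want to apply Bass--Guivarc'h to need not exist. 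The argument in the cited source (going back to Guivarc'h) avoids both issues by proving a direct ``ball-box at infinity'': one fixes a Malcev basis adapted to the lower central series, assigns to the $j$-th coordinate the weight $u_j=i$ when the corresponding basis vector lies in $\ag_{i-1}\setminus\ag_i$, and shows that for $t\ge 1$ the ball $B(0,t)$ contains and is contained in boxes $\prod_j[-c\,t^{u_j},c\,t^{u_j}]$. The upper inclusion comes from estimating the polynomial Baker--Campbell--Hausdorff expressions for a product of at most $t$ generators; the lower inclusion from realizing elements of $\ag_{i-1}$ as $i$-fold commutators of the horizontal generators with a quantitative bound on the path length. Since $\sum_j u_j=\sum_i i\,m_i=D$, taking Lebesgue measure of the boxes gives the second pair of inequalities. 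If you want a self-contained write-up, this direct box argument is the one to reproduce; the Pansu and lattice shortcuts both require hypotheses or uniformity that you do not have.
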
	
In particular, we obtain the following inequality.
	\begin{equation}\label{in:doub}
	\mu(B(x,s))\leq C\Big( \frac{s}{r}\Big)^{l}\mu(B(x,r)),
	\end{equation}
	where $x\in \rr^d$, $0<r\leq s\leq 1$.
%\begin{lemma}\label{lem:doub}
%	Let $(X,\mu)$ be a doubling measure space with doubling constant $C$. Then, for any $0<r<s<\infty$, we have
%	\[
%	\mu(B(x,s))\leq C\Big( \frac{s}{r}\Big)^{l}\mu(B(x,r)),
%	\]
%	where $x\in \rr^d$, $0<s\leq r\leq 1$.
%\end{lemma}	
%	Let us take a ball of radius $r$, and since the Lebesgue measure is invariant by left translations, centered it in $0$. If $0<r<1/2$, then
%	\[
%	|B(0,2r)|\leq \alpha(2r)^l\leq \alpha^2 2^l |B(0,r)|\leq \alpha^2 2^D|B(0,r)|.
%	\]
%	If $1/2<r<1$, then
%	\[
%	|B(0,2r)|\leq \beta(2r)^D\leq \beta 2^Dr^l\leq \beta\alpha 2^D|B(0,r)|.
%	\]
%	If $1<r<\infty$, the bound is clearly $\beta^2 2^D$. Therefore, the doubling constant of a nilpotent group is, at most, 
%	\begin{equation}\label{eq:dim}
%	Q:=2\log_2(\max\{\alpha,\beta\})+D.
%	\end{equation}
%
%	 The number $l$ acts as a local homogeneous dimension in the results of this paper.
%	 
%\begin{remark}
% In general a nilpotent group is a doubling measure space but not an Ahlfors-regular space, that is, there are no positive constants $a$ and $\tilde{a}$ and $k>1$ such that $a r^k<|B(x,r)|<\tilde{a}r^k$ for all $x$ in $\rr^d$ and $r>0$. 
% Another example of a doubling space which is not Ahlfors-regular is a cylinder in $\rr^3$ with distance induced by the Euclidean distance. Consider $A\subset\rr^2$ open and bounded, and $C=A\times \rr\subseteq\rr^3$. Then we have the estimates of the volume growth of the balls $|B(x,2r)|\simeq 2^3|B(x,r)|$ when $r$ is small and $|B(x,2r)|\simeq2|B(x,r)|$  when $r$ is big enough, where $|\cdot|$ is the Lebesgue measure in $A\times \rr$.
%\end{remark}	
%
\subsection{sub-Finsler norms}
We follow the approach developed in \cite{ritore-pozuelo}.

	Let $V$ be a vector space. We say that $|\cdot|:V\to \rr^+$ is a norm if it verifies
\begin{enumerate}
	\item $|x|=0\Leftrightarrow x=0$.
	\item $|sx|=s|x|$  $\forall s>0$ and $\forall x\in V$.
	\item $|x+y|\leq|x|+|y|$ $\forall x,y\in V$.
\end{enumerate}
We stress the fact that we are not assuming the symmetry property $\norm{-v}=\norm{v}$.

Associated to a given a norm $||\cdot||$ in $V$ we have the set $F=\{u\in V:||u||\le 1\}$, which is compact, convex and includes $0$ in its interior. Reciprocally, given a compact convex set $K$ with $0\in\intt(K)$, the function 
$$||u||_K=\inf\{\la\ge 0:  u\in\la K\}$$
 defines a norm in $V$ so that $F=\{u\in V:||u||_K\le 1\}$.

Given a norm $\norm{\cdot}$ and an scalar product $\escpr{\cdot,\cdot}$ in $V$, we consider its dual norm $\norm{\cdot}_*$ of $||\cdot||$ with respect to $\escpr{\cdot,\cdot}$ defined by
\[
\norm{u}_*=\sup_{\norm{v}\le 1}\escpr{u,v}.
\]
The dual norm is the support function $h$ of the unit ball $K=\{u\in V: \norm{u}\le 1\}$ with respect to the scalar product $\escpr{\cdot,\cdot}$.
% From this point on, we assume that $\norm{\cdot}$ is smooth (i.e., it is $C^\infty$ in $V\setminus\{0\}$) and strictly convex:
%\[
%\norm{\la u+(1-\la)v}<1,\quad\text{for all }\la\in (0,1), \text{when }  u\neq v, \norm{u}=\norm{v}=1.
%\]

Given $u\in V$, the compactness of the unit ball of $\norm{\cdot}$ and the continuity of $\norm{\cdot}$ imply the existence of $u_0\in V$ satisfying the equality $\norm{u}_*=\escpr{u,u_0}$. Moreover, it can be easily checked that $\norm{u_0}=1$.
% In general, a point $u_0$ satisfying this property is not unique, but uniqueness follows from the assumption that $\norm{\cdot}$ is strictly convex: this is proved by contradiction assuming the existence of another point $u_0'$ with $\norm{u_0'}\le 1$ satisfying $\norm{u}_*=\escpr{u,u_0'}$. Of course $u_0'$ must also satisfy $\norm{u_0'}=1$. Then all the points $v$ in the segment $[u_0,u_0']$ satisfy $\norm{v}\le 1$ and $\norm{u}_*=\escpr{u,v}$; hence $\norm{v}=1$. But this contradicts the strict convexity of $\norm{\cdot}$ unless $u_0=u_0'$. 
We shall define $\Pi(u)$ as the  set of vectors satisfying  $\norm{\pi(u)}=1$ and
\begin{equation}\label{eq:proy}
h(u)=\norm{u}_*=\escpr{u,\pi(u)}.
\end{equation}
for any $\pi(u)\in \Pi(u)$. %If $\la>0$ then it is easily checked that $\pi(\la u)=\pi(u)$.
%
%The unit ball $K=\{x : |x|\leq1\}$ is a convex, compact set such that $0\in K$. Reciprocally, given a convex compact set $K$ such that $0\in K$, we can define the \emph{Minkowski norm associated to $K$} as
%\[
%|x|_K:=\min\{t : x\in tK \}.
%\]
%It is immediate that $|\cdot|_K$ verifies $(1)$ and $(2)$.Let us show that $(3)$ is also verified. Let $x,y\in V$, $t_1:=|x|_K$ and $t_2:=|y|_K$. Then $x+y\in t_1K+t_2K$ and if we see that $t_1K+t_2K\subseteq (t_1+t_2)K$, then $|x+y|_K\leq t_1+t_2$ which is $(3)$. $x+y\in (t_1+t_2)K\Leftrightarrow \frac{x+y}{t_1+t_2}\in K$.
%\[
%\frac{x}{t_1+t_2}+\frac{y}{t_1+t_2}=\frac{1}{1+t_2/t_1}\frac{x}{t_1}+\frac{1}{1+t_1/t_2}\frac{y}{t_2}.
%\]
%Since $\frac{x}{t_1}\in K$, $\frac{y}{t_2}\in K$ and $\frac{1}{1+t_2/t_1}+\frac{1}{1+t_1/t_2}=\frac{t_1}{t_1+t_2}+\frac{t_2}{t_2+t_1}=1$, we have that $\frac{x+y}{t_1+t_2}$ is a convex combination of elements of $K$ and thus $\frac{x+y}{t_1+t_2}\in K$.
%
%Given a scalar product $\langle,\rangle$ in $V$, the support function associated to K is defined as 
%$$h_K(x):=\max_{y\in K}\langle x,y\rangle.$$
%
%
%Fixed a  Minkowski norm $|\cdot|$ in $V_0$ associated to the convex set $K$, we can define the support function of $K$   for any vector $v$ in $X^0_x$ as
%\[
%h_K(v)=\max_{y\in (dl_x)_0 K}\langle y,v\rangle.
%\]

Now we move to a nilpotent group with a bracket generating system $X$. Given a convex set $K\subseteq \hhh^0_0$ containing the origin,  we construct a norm $\norm{\cdot}$ in $\hhh^0$ extending the norm $\|\cdot\|_K$ in $\hhh^0_0$ by left-invariance. For that, we take a horizontal vector at $p\in\rr^d$, $v=\sum_i a_iX_i(p)$, by means of the formula
\begin{equation}
\norm{\sum_i a_iX_i(p)}_p=\norm{\sum_i a_iX_i(0)}_K.
\end{equation}
 
We say that $(\rr^d,\cdot,X,K)$ is a sub-Finsler nilpotent group.

\subsection{Sub-Finsler perimeter}
From now on, we shall identify a nilpotent group $G$ with $(\rr^d,\cdot)$. Let $E\subset \rr^d$ be a measurable set, $\norm{\cdot}_K$ the left-invariant norm associated to a convex body $K\subset \hhh^0_0$ so that $0\in\intt(K)$, and $\Om\subset \rr^d$ an open subset. We say that $E$ has locally finite $K$-perimeter in $\Om$ if we have
\[
P_K(E;\Om)=\sup\bigg\{\int_E\divv(U)\,dx: U\in\hhh_0^1(\Om), \norm{U}_{K,\infty}\le 1\bigg\}<+\infty.
\]
In this expression, $\hhh_0^1(\Om)$ is the space of horizontal vector fields of class $C^1$ with compact support in $\Om$, $\norm{U}_{K,\infty}=\sup_{p\in V} \norm{U_p}_K$  and the divergence is computed with respect to the metric $g$. The integral is computed with respect to the Lebesgue measure $dx$ on $\rr^d$. In case that $\Omega= G$ we write $P_{K}(E):=P_{K}(E;\Omega)$. When $K$ is the Euclidean ball in $\hhh^0_0$, we recover the sub-Riemannian perimeter, and drop the subindex $K$.

Let $K\subset \hhh^0_0$ be a bounded convex body containing $0$ in its interior. Then there exists a constant $C_1>1$ such that
\[
C_1^{-1} \norm{x}\le \norm{x}_K\le C_1 \norm{x},\quad \text{for all }x\in\rr^d,
\]
where $\|\cdot\|$ is the Euclidean norm in $\rr^d$. Let $E\subset\rr^d$ be a measurable set and $\Om\subset\rr^d$ an open set. Take $U\in\hhh_0^1(\Om)$ a vector field with $\norm{U}_{K,\infty}\le 1$. Hence $\norm{C_1^{-1} U}\le \norm{U}_{K}\le 1$ and
\begin{equation*}
	\int_E \divv(U)dx=C_1\int_E \divv(C_1^{-1} U)\,dx\le C_1 P(E;\Om),
\end{equation*}
Taking supremum over the set of $C^1$ horizontal vector fields with compact support in $\Om$ and $\norm{\cdot}_K\le 1$, we get $ P_{K}(E;\Om)\le C_1 P(E;\Om)$. In a similar way we get the inequality $C_1^{-1} P(E;\Om)\le  P_{K}(E;\Om)$, so that we have
\begin{equation}\label{eq:abscont}
C_1^{-1}  P(E;\Om)\leq  P_{K}(E;\Om)\leq C_1 P(E;\Om).
\end{equation}
As a consequence,  $E$ has finite $K$-perimeter if and only if it has finite (sub-Riemannian) perimeter. Hence all known results in the standard case apply to the sub-Finsler perimeter.

The proof of the Riesz Representation Theorem can be adapted, see \S~2.4 in \cite{ritore-pozuelo}, to obtain the existence of a Radon measure $|\partial E|_K$ on $\rr^d$ and a $|\partial E|_K$-measurable horizontal vector field $\nu_K$ in $\rr^d$ so that 
\begin{equation}\label{eq:div}
\int_E \divv(U)dx=\int_{\rr^d}\langle U,\nu_K\rangle d|\partial E|_K,
\end{equation}
where $U$ is a $C^1_0$ horizontal vector field.
Moreover, the $K$-perimeter can be represented by
\begin{equation}\label{reper}
P_K(E;\Omega)=\int_\Omega \|\nu_K\|_{K,*}d|\partial E|_K .
\end{equation}

 From the definition and that the Lebesgue measure is invariant by left-translations it follows that, for any $x\in \rr^d$,
\[
P_K(\ell_x\cdot E)=P_K(E).
\]
In particular, using Equation \eqref{eq:tras}, we get that the $K$-perimeter is invariant by Euclidean translations in the $d$ component, that is,
\[
P_K(E+(0,\ldots,0,a))=P_K(E).
\]

The following decomposition of the $K$-perimeter will be used exhaustively:
\begin{equation}\label{des:decom}
P_K(E)\geq P_K(E\cap B)+P_K(E\setminus B)-2C_1P(E\cap B;\partial B),
\end{equation}
where $B$ is any sub-Riemannian ball in $\rr^d$. Indeed,  as $\{B, \rr^d\setminus\bar{B}, \partial B\}$ is a partition of $\rr^d$, the Representation formula \eqref{reper} gives
\[
P_K(E)=P_K(E;B)+P_K(E;\rr^d\setminus \bar{B})+P_K(E;\partial B).
\]
Using Inequality \eqref{eq:abscont}, we get
\[
P_K(E;B)\geq P_K(E\cap B)-P_K(E\cap B;\partial B)\geq P_K(E\cap B)-C_1 P(E\cap B;\partial B).
\]
Similarly, $P_K(E;\rr^d\setminus \bar{B})\geq P_K(E\setminus B)-C_1 P(E\cap B;\partial B)$.

 The following relation between the sub-Riemannian perimeter and the derivative of the volume can be found as Lemma 3.5 in \cite{MR1823840}.

\begin{lemma}
	Let $(\rr^d,\cdot,X,K)$ be a sub-Finsler nilpotent group. Let $F\subseteq \rr^d$ be a finite (sub-Riemannian) perimeter set and $B_r$ the sub-Riemannian ball of radius $r$ centered in $0$. Then for a.e. $r>0$, we have
	\begin{equation}\label{eq:volper}
	\max\{P(F\cap B_r;\partial B_r),P(F\backslash B_r;\partial B_r)\}\leq - \frac{d}{ds}\Big|_{s=r}|F\setminus B_{s}|.
	\end{equation}
\end{lemma}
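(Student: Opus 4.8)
The plan is to relate the two relevant perimeters—the perimeter of the sliced set $F\cap B_r$ (respectively $F\setminus B_r$) inside the sphere $\partial B_r$—to the rate of change of the volume function $V(s)=|F\setminus B_s|$. The key observation is that $V$ is a monotone non-increasing function of $s$ (as $s$ grows the ball $B_s$ captures more of $\rr^d$, so $F\setminus B_s$ shrinks), hence $V$ is differentiable almost everywhere and $-\frac{d}{ds}V(s)\ge 0$ a.e. The strategy, following the coarea-type argument of Lemma~3.5 in \cite{MR1823840}, is to compute this derivative explicitly via the coarea formula applied to the distance function $\rho(x)=d_X(0,x)$, whose level sets are precisely the spheres $\partial B_s$.

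First I would introduce $\rho(x)=d_X(0,x)$ and recall that $\rho$ is Lipschitz (with respect to the underlying Euclidean structure) so that the coarea formula is available; its sublevel sets are the sub-Riemannian balls $B_s=\{\rho<s\}$. Writing $V(s)=|F|-|F\cap B_s|=|F|-\int_0^s \mathcal{H}^{d-1}\!\big(F\cap\{\rho=t\}\big)\,\frac{dt}{|\nabla\rho|}$ (schematically, the precise weight coming from the coarea formula for $\rho$), one sees that for a.e.\ $r$ the derivative $-\frac{d}{ds}\big|_{s=r}V(s)$ equals the $(d-1)$-dimensional measure of the slice $F\cap\partial B_r$, suitably weighted. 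The heart of the matter is then to compare this slice measure with the Euclidean perimeters $P(F\cap B_r;\partial B_r)$ and $P(F\setminus B_r;\partial B_r)$.

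The main step is the following structural fact: the reduced boundary of the truncated set $F\cap B_r$, restricted to the sphere $\partial B_r$, is contained (up to an $\mathcal{H}^{d-1}$-null set) in the trace $F\cap\partial B_r$, because the only new boundary that $F\cap B_r$ acquires on $\partial B_r$ comes from the part of $F$ that is being cut by the sphere. Consequently $P(F\cap B_r;\partial B_r)\le \mathcal{H}^{d-1}(F\cap\partial B_r)$ for a.e.\ $r$, and the identical bound holds for $F\setminus B_r$ since it is cut by the same sphere along the same trace. Combining this with the coarea identification of $-\frac{d}{ds}\big|_{s=r}V(s)$ as (a measure dominating) $\mathcal{H}^{d-1}(F\cap\partial B_r)$ yields both inequalities at once, whence the maximum of the two perimeters is controlled.

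The step I expect to be the main obstacle is making rigorous the identification of the slices of the reduced boundary of $F\cap B_r$ on the sphere with the trace $F\cap\partial B_r$, and ensuring this holds for \emph{almost every} $r$: one must handle the a.e.\ differentiability of $V$ simultaneously with the a.e.\ regularity of the spheres as level sets of the Lipschitz function $\rho$ (in particular the set of radii where $|\nabla\rho|$ degenerates, and the set of radii for which $F\cap\partial B_r$ fails to capture the boundary trace, must both be shown to be Lebesgue-null). Since the statement only asserts the inequality for a.e.\ $r$, it suffices to discard these bad radii, and the cited Lemma~3.5 in \cite{MR1823840} provides exactly this coarea-plus-slicing machinery, so I would invoke it and verify that the sub-Finsler setting changes nothing, as the comparison \eqref{eq:abscont} between $P_K$ and $P$ is not even needed here—the whole statement is purely sub-Riemannian.
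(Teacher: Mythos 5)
The paper offers no proof of this lemma: it is quoted directly from Lemma~3.5 of \cite{MR1823840}, whose argument is a truncation of $F$ by Lipschitz cut-offs of the distance function, not a slicing argument. Your route is genuinely different, and it has a gap I do not think can be closed as written. The premise that $\rho(x)=d_X(0,x)$ is Lipschitz for the underlying Euclidean structure is false: $\rho$ is $1$-Lipschitz for the Carnot--Carath\'eodory distance itself but only H\"older for the Euclidean one (in the Heisenberg group $d_X(0,(0,0,t))\approx|t|^{1/2}$ along the centre), so the Euclidean coarea formula with weight $1/|\nabla\rho|$ is not available and the representation of $V(s)=|F\setminus B_s|$ as an integral of Euclidean $(d-1)$-dimensional slice measures --- the engine of your argument --- does not get off the ground. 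Moreover, even granting some coarea decomposition, your final chain compares three different measures on $\partial B_r$: the \emph{horizontal} perimeter $P(F\cap B_r;\partial B_r)$, the Euclidean trace measure $\mathcal{H}^{d-1}(F\cap\partial B_r)$, and the coarea weight hidden in $-V'(r)$. These do not line up in the required direction unless one injects precisely the eikonal information $|\nabla_{H}\rho|\le 1$ a.e., which your argument never uses; the trace identity you invoke is a statement about Euclidean perimeters cut by smooth spheres, whereas here the spheres are level sets of a non-smooth function and the perimeter is the sub-Riemannian one.

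The intended (and much shorter) proof runs as follows. Fix $r$ a point of differentiability of $V$ with $P(F;\partial B_r)=0$ (all but countably many $r$), let $A$ be an open neighbourhood of $\partial B_r$ and $U$ a horizontal field supported in $A$ with $\norm{U}_{\infty}\le 1$. Setting $\phi_\eps:=\min\{1,\max\{0,(r-\rho)/\eps\}\}$, one gets
\begin{equation*}
\int_F\phi_\eps\divv U\,dx=\int_F\divv(\phi_\eps U)\,dx-\int_F\escpr{\nabla_{H}\phi_\eps,U}\,dx\le P(F;A\cap B_r)+\eps^{-1}\,|F\cap(B_r\setminus B_{r-\eps})|,
\end{equation*}
where the last term is controlled by the eikonal inequality $|\nabla_{H}\rho|\le 1$ a.e.; letting $\eps\to 0$, taking the supremum over $U$ and shrinking $A$ to $\partial B_r$ yields $P(F\cap B_r;\partial B_r)\le -V'(r)$, and the bound for $F\setminus B_r$ is identical with the outer cut-off $\min\{1,\max\{0,(\rho-r)/\eps\}\}$, since $\eps^{-1}|F\cap(B_{r+\eps}\setminus B_r)|$ also converges to $-V'(r)$ at differentiability points. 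You should rebuild your proof on this truncation scheme (or simply cite \cite{MR1823840}, as the paper does) rather than on Euclidean slicing.
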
	
%\begin{proof}
%	Let $\{u_n\}\to u$ in $L^1$, and $u_n\in C^\infty$ and 
%	$\phi\in\mathcal{F}_{|\cdot|}(B_{r+t})$. Then 
%	\begin{multline}
%		%\text{Var}_{|\cdot|}(B(x,r+t); uf )
%		\int_{B_{r+t}}(u_nf)\divv_0 \phi dx=-\int_{B_{r+t}}\sum_{i=1}^{n_1}<\nabla_0(u_nf),\phi>dx\\
%	\begin{split}
%		&=-\int_{B_{r+t}}<(f)\nabla_0u_n,\phi>dx-\int_{B_{r+t}}<u_n\nabla_0f\phi>dx\\
%		&=\int_{B_{r+t}}h_{-C}((f)\nabla_0u_n)dx+\int_{B_{r+t}}h_{-C}(u_n\nabla_0f)dx\\
%		&\leq \alpha_C \Big(\int_{B_{r+t}} |f||\nabla_0u_n|dx+ \int_{B_{r+t}}|u_n||\nabla_0f|dx\Big)
%		\end{split}
%	\end{multline}
%
%	Therefore we have
%	\[
%	Var(B_{r+t};uf)\leq \alpha_C \Big(\int_{B_{r+t}} |f||\nabla_0u|dx+ \int_{B_{r+t}}|u||\nabla_0f|dx\Big)
%	\]
%	Applying the inequality to $u=\chi_E$ and $f=f_\varepsilon=$
%\end{proof}

Given a measurable set $F$, the density of $F$ at $x$ is given by
\[
\lim_{t\to 0}\frac{|F\cap B(x,r)|}{|B(x,r)|}.
\]
The \emph{essential boundary} of $F$ is the set of points where the density of $F$ is neither $0$ nor $1$.
%\begin{proof}
%	Inequality \eqref{} in \cite{} gives us 
%	\[
%	\min\{|F\cap B(0,s)|,|F\setminus B(0,s)|\}^{l-1/l}\leq C s |B(0,s)|^{-1/l}P(F;B(0,s)).
%	\]
%	Where $B(0,s)$ is the sub-Riemannian ball of radius $s$ and $P$ is the sub-Riemannian perimeter. By Lemma \ref{lem:doub}, $|B(0,s)|^{-1/l}\leq s^{-1}|B(0,1)|^{-1/l}$ and taking limits as $s$ tends to $\infty$ and inequality \eqref{eq:abscont}, we get
%	\[
%	|F|^{l-1/l}\leq \tilde{C}P_K(F). \qedhere
%	\]
%\end{proof}	
%

\subsection{Isoperimetric inequality for small volumes}
A fundamental tool in a metric measure space $(X,d,\mu)$ is the existence of a $(1,1)$-Poincaré Inequality, that is, the existence of constants $C\geq0$ and $\lambda\geq1$ such that
\begin{equation}\label{des:Poincare}
\int_{B(x,r)}|f-f_{x,r}| d\mu\leq Cr\int_{B(x,\lambda r)}|\nabla f|d\mu,
\end{equation}
 for all $f$ is a locally Lipschitz function, where $B(x,r)$ is the metric ball of center $x$ and radius $r$, $f_{x,r}:=1/|B(x,r)|\int_{B(x,r)}fd\mu$ and $|\nabla f|$ is an upper gradient of $f$ in the sense of Heinonen and Koskela \cite{MR1654771}. In the context of connected Lie groups with polynomial volume growth, a $(1,1)$-Poincaré inequality was proven by Varopoulos in \cite{MR892879}, and in $\rr^d$ with a Lie bracket generating system by Jerison in \cite{MR850547}. 
 As stated by Hajlasz and Koskela in Theorem 5.1 and 9.7 in \cite{MR1683160}, the $(1,1)$-Poincaré inequality \eqref{des:Poincare} together with Inequality \eqref{in:doub} implies the following Sobolev inequality.
 \begin{equation}\label{des:Sobolev}
 \Big(\fint_{B(x,r)}|f-f_{x,r}|^{l/(l-1)} d\mu\Big)^{(l-1)/l}\leq \tilde {C}r\fint_{B(x, r)}|\nabla f|d\mu.
 \end{equation}
  From Inequality \eqref{des:Sobolev} and \eqref{eq:abscont}, the relative isoperimetric inequality easily follows (see also Theorem 1.18 in \cite{MR1404326}).
  \begin{theorem}[Relative Isoperimetric inequality]\label{thm:relin}
 	Let $(\rr^d,\cdot,X,K)$ be a sub-Finsler nilpotent group with local dimension $l$. There exists $C_2>0$ such that if $F\subset \rr^d$ is any finite perimeter, then
 	\begin{equation}\label{eq:relin}
 	C\min\{|F\cap B(x,r)|,|B(x,r)\setminus F| \}^{(l-1)/l}\leq P_K(F),
 	\end{equation}
 	for any $x\in \rr^d$ and $0<r\leq1$.
 \end{theorem}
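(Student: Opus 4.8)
The plan is to derive the relative isoperimetric inequality \eqref{eq:relin} directly from the Sobolev inequality \eqref{des:Sobolev} applied to a suitable regularization of the characteristic function $\chi_F$, combined with the comparison \eqref{eq:abscont} between the $K$-perimeter and the sub-Riemannian perimeter. First I would recall the well-known scheme that converts a $(1,l/(l-1))$-Sobolev inequality into an isoperimetric inequality: formally, one wants to take $f=\chi_F$ in \eqref{des:Sobolev}, so that the left-hand side becomes (a power of) the relative measure $\min\{|F\cap B(x,r)|,|B(x,r)\setminus F|\}$ and the right-hand side becomes (up to constants) the sub-Riemannian perimeter $P(F;B(x,\lambda r))$. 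Since $\chi_F$ is not Lipschitz, the rigorous argument proceeds by approximating $\chi_F$ in $L^1_{\mathrm{loc}}$ by a sequence of locally Lipschitz functions $f_j$ whose upper-gradient integrals converge to the total variation; this is the standard mollification/relaxation step for sets of finite perimeter in the Carnot--Carathéodory setting, and it is exactly where one uses that $F$ has finite (sub-Riemannian) perimeter.

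Concretely, I would argue as follows. Fix $x\in\rr^d$ and $0<r\le 1$. Apply \eqref{des:Sobolev} to each approximating function $f_j$ and pass to the limit, obtaining
\begin{equation*}
\Big(\fint_{B(x,r)}|\chi_F-(\chi_F)_{x,r}|^{l/(l-1)}\,d\mu\Big)^{(l-1)/l}\le \tilde C\, r\,\frac{P(F;B(x,r))}{|B(x,r)|}.
\end{equation*}
Because $\chi_F$ takes only the values $0$ and $1$, the left-hand integrand is bounded below by a constant multiple of $\min\{|F\cap B(x,r)|,|B(x,r)\setminus F|\}/|B(x,r)|$; this is the elementary two-point estimate $\min\{a,1-a\}\le (a^{p}(1-a)+\ldots)$ that makes the Sobolev norm of a characteristic function comparable to the relative measure. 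Then I would invoke the volume bounds from Theorem \ref{thm:balls}, specifically the lower bound $|B(x,r)|\ge \alpha^{-1}r^{l}$ valid for $0<r\le 1$ together with left-invariance of the measure, to absorb the factor $r/|B(x,r)|^{1/l}$ into a single constant. Finally, I would replace $P(F;B(x,r))$ by $P_K(F)$ using \eqref{eq:abscont}, namely $P(F;B(x,r))\le P(F)\le C_1 P_K(F)$, which produces the claimed inequality with a constant $C$ depending only on $\tilde C$, $C_1$, $\alpha$, and $l$.

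The main obstacle I anticipate is the rigorous justification of the approximation step, i.e. producing locally Lipschitz $f_j\to\chi_F$ for which $\int_{B(x,\lambda r)}|\nabla f_j|\,d\mu$ converges to the sub-Riemannian perimeter $P(F;B(x,\lambda r))$ rather than overshooting it; controlling the contribution of the enlarged ball $B(x,\lambda r)$ versus $B(x,r)$ and the boundary term $P(F;\partial B)$ requires some care, and one typically appeals to the coarea formula and the equivalence between the relaxed upper-gradient total variation and the perimeter measure $|\partial F|$ in this Hörmander setting. A secondary technical point is the homogeneity bookkeeping: one must check that all the powers of $r$ cancel correctly and that the restriction $0<r\le 1$ is exactly what is needed so that the local volume bound $|B(x,r)|\sim r^{l}$ applies uniformly. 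Once the approximation and the elementary inequality for the characteristic function are in place, the remainder is a direct chain of the stated inequalities, so I would expect the bulk of the work to be concentrated in making the passage from \eqref{des:Sobolev} to sets of finite perimeter fully rigorous.
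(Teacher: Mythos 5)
Your proposal is correct and follows essentially the same route as the paper, which derives the relative isoperimetric inequality from the Sobolev inequality \eqref{des:Sobolev} combined with the perimeter comparison \eqref{eq:abscont} (citing Theorem 1.18 of Garofalo--Nhieu for the standard passage from the Sobolev inequality to sets of finite perimeter). The details you supply --- approximating $\chi_F$ by Lipschitz functions, the two-point lower bound for the mean oscillation of a characteristic function, and absorbing $r\,|B(x,r)|^{-1/l}$ via the local volume bound of Theorem \ref{thm:balls} --- are exactly the standard steps the paper leaves implicit.
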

 
 From a classical covering argument, we obtain the following isoperimetric inequality for small volumes. The proof follows identically as in Lemma 3.10 of \cite{MR2979606}.
  \begin{theorem}[Isoperimetric inequality for small volumes]\label{thm:in}
 	Let $(\rr^d,\cdot,X,K)$ be a sub-Finsler nilpotent group with local dimension $l$. There exists $C_2>0$ and $v_0>0$ such that if $F\subseteq \rr^d$ is any finite perimeter set and $|F|<v_0$, then
 	\begin{equation}\label{eq:in}
 	C_2|F|^{l-1/l}\leq P_K(F).
 	\end{equation}
 \end{theorem}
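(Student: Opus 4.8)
The plan is to deduce the global estimate \eqref{eq:in} from the relative isoperimetric inequality of Theorem~\ref{thm:relin} by a covering argument, exactly in the spirit of Lemma~3.10 of \cite{MR2979606}. Fix a radius $r_0\in(0,1]$ and set $v_0:=\tfrac12\alpha^{-1}r_0^{\,l}$, where $\alpha$ is the constant of Theorem~\ref{thm:balls}. The purpose of this choice is to force the minimum in \eqref{eq:relin} to always equal the first entry: if $|F|<v_0$, then by the lower bound in \eqref{eq:ball} we have $|B(x,r_0)|\ge\alpha^{-1}r_0^{\,l}=2v_0$ for every $x\in\rr^d$, whence
\begin{equation*}
|F\cap B(x,r_0)|\le|F|<v_0\le\tfrac12|B(x,r_0)|,
\end{equation*}
so that $\min\{|F\cap B(x,r_0)|,|B(x,r_0)\setminus F|\}=|F\cap B(x,r_0)|$. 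This is the step that makes the left-hand side of \eqref{eq:relin} expressible in terms of $F$ alone.

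Next I would fix a good covering of the group. Let $\{x_i\}_{i\in\nn}$ be a maximal $r_0$-separated subset of $(\rr^d,d_X)$. Then the balls $B(x_i,r_0/2)$ are pairwise disjoint, the balls $B(x_i,r_0)$ cover $\rr^d$, and, by the two-sided volume bounds \eqref{eq:ball} together with the doubling inequality \eqref{in:doub}, the dilated balls $B(x_i,\lambda r_0)$ (with $\lambda\ge1$ the dilation constant of the Poincar\'e inequality \eqref{des:Poincare}) have overlap bounded by a constant $N$ depending only on $\alpha$, $\beta$ and $\lambda$; indeed, if $B(x_j,\lambda r_0)$ meets $B(x_i,\lambda r_0)$ then the disjoint balls $B(x_j,r_0/2)$ all sit inside $B(x_i,2\lambda r_0+r_0/2)$, and comparing volumes via \eqref{eq:ball} bounds their number. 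Applying the relative isoperimetric inequality on each $B(x_i,r_0)$, in the localized form produced by the Sobolev inequality \eqref{des:Sobolev} (with the relative perimeter $P_K(F;B(x_i,\lambda r_0))$ on the right), gives
\begin{equation*}
C\,|F\cap B(x_i,r_0)|^{(l-1)/l}\le P_K(F;B(x_i,\lambda r_0))\qquad\text{for every }i.
\end{equation*}

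Finally I would sum over $i$ and combine two elementary facts. Since $l>1$ we have $(l-1)/l\in(0,1)$, so $t\mapsto t^{(l-1)/l}$ is subadditive and hence $\sum_i a_i^{(l-1)/l}\ge\big(\sum_i a_i\big)^{(l-1)/l}$ for nonnegative $a_i$; taking $a_i=|F\cap B(x_i,r_0)|$ and using that the balls cover $F$ yields
\begin{equation*}
\sum_i|F\cap B(x_i,r_0)|^{(l-1)/l}\ge\Big(\sum_i|F\cap B(x_i,r_0)|\Big)^{(l-1)/l}\ge|F|^{(l-1)/l}.
\end{equation*}
On the other side, the bounded overlap of $\{B(x_i,\lambda r_0)\}$ gives $\sum_i P_K(F;B(x_i,\lambda r_0))\le N\,P_K(F)$. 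Chaining these with the per-ball inequality produces $C\,|F|^{(l-1)/l}\le N\,P_K(F)$, which is \eqref{eq:in} with $C_2=C/N$.

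The hard part is entirely on the perimeter side: the volume estimates are soft, but in order to sum the right-hand sides to a \emph{finite} multiple of the global perimeter $P_K(F)$ one must use the localized form of Theorem~\ref{thm:relin} (the right-hand side being a relative perimeter, not the global one) and uniformly control the overlap of the dilated balls $B(x_i,\lambda r_0)$. This is precisely where the doubling property \eqref{in:doub} and the two-sided bounds \eqref{eq:ball} are indispensable, since they are what makes $N$ finite and independent of $F$. A secondary point that must be checked with care is the calibration of $v_0$, which guarantees that the minimum in \eqref{eq:relin} is always $|F\cap B(x_i,r_0)|$ rather than $|B(x_i,r_0)\setminus F|$; without this the summed left-hand side could not be returned to $|F|^{(l-1)/l}$.
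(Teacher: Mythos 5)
Your argument is correct and is exactly the classical covering argument that the paper invokes (it refers the proof to Lemma~3.10 of \cite{MR2979606} rather than writing it out): calibrate $v_0$ so the minimum in the relative isoperimetric inequality is $|F\cap B(x_i,r_0)|$, cover by a maximal $r_0$-separated net, sum using subadditivity of $t\mapsto t^{(l-1)/l}$ and the bounded overlap coming from the volume bounds \eqref{eq:ball}. You are also right, and careful, to insist on the localized form of Theorem~\ref{thm:relin} with the relative perimeter $P_K(F;B(x_i,\lambda r_0))$ on the right-hand side, since the globally stated version could not be summed over the covering.
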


%\begin{remark}
%	This inequality is far from being optimal. 
%\end{remark}	

\section{Properties of the Isoperimetric regions}

Throughout this section, $(\rr^d,\cdot,X,K)$ denote a nilpotent group with $X$ a Lie bracket generating system, $K$ denote a convex body in $\hhh^0_0$ containing $0$ in its interior, $B(x,r)$ the sub-Riemannian ball centered in $x$ of radius $r>0$. 
We shall see that a isoperimetric region $E$ is open up to a nullset and its topological and essential boundary coincide, using the arguments developed in \cite{MR2000099,MR1625982}, and the boundedness. Moreover, we shall prove a Deformation Lemma.

The isoperimetric profile is defined as
\[
I_K(v):=\inf \{P_K(E):  E\subseteq\rr^d \textit{ is a finite perimeter set and } |E|=v  \}.
\]

%
%\begin{corollary}
%	The $K$-perimeter of a ball $B(x,r)$ is bounded by $\theta r^{d-1/d}$ for $r$ small enough.
%\end{corollary}
%
%\begin{proof}
%	Let $f(t)=|B(x,t)|$, $g_1(t)=\theta_1 t^d$ and $g_2(t):=\theta_2 t^d$. Then by Theorem !!, $g_2\leq f\leq g_1$. $g_1(0)=f(0)=0$.
%	\[
%	0=\lim_{h\to 0}\frac{g_2(h)}{h}\leq\lim_{h\to 0}\frac{f(h)}{h}\leq\lim_{h\to 0}\frac{g_1(h)}{h}=0.
%	\]
%	Extending $f$ to $(-\infty,0]$ by $f(t)=f(-t)$, we still have $g_2\leq f\leq g_1$
%	\[
%	\bar{D^2}f(0)=\limsup_{h\to 0}\frac{f(h)+f(-h)}{h^2}\leq \limsup_{h\to 0}\frac{g_1(h)+g_1(-h)}{h^2}=\theta_1d(d-1)
%	\]
%	By Theorem !!, 
%	\[
%	P(B(x,r))=\lim_{\varepsilon\to 0}\frac{|B(x,t)\cdot B(x,\varepsilon)|-|B(x,t)|}{\varepsilon}=\lim_{\varepsilon\to 0}\frac{|B(x,t+\varepsilon)|-|B(x,t)|}{\varepsilon}\leq \theta \frac{(r+\varepsilon)^d-r^d+R(r+\varepsilon)-R(r)}{\varepsilon}
%	\]
%\end{proof}	
%This was already proven by Leonardi and Rigot in the case of Carnot groups \cite{}, and for general nilpotent groups we shall use the Deformation Lemma \ref{}.
%
%\begin{definition}[Condition B]
%	Let $F$ be an open set. We say that $F$ satisfies the condition B if $\exists C>0$ such that, for any ball $B$ centered in $\partial F$ with radius $r\leq 1$, there exists two balls $B_1$ and $B_2$ with radius $Cr$ with $B_1\subseteq F\cap B$ and $B_2\subseteq F\cap B_2$.
%\end{definition}	
%
%Let 
%\[
%h(x,r)=r^{-l}\min\{|E\cap B(x,r)|, |B(x,r)\setminus E|\}
%\]
%which measures in a scaling-invariant way the minimum proportion of $E$ inside and outside a ball.
\begin{proposition}\label{lem:non}
Let $(\rr^d,\cdot,X,K)$ be a sub-Finsler nilpotent group. The isoperimetric profile is non-decreasing.
\end{proposition}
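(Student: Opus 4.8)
The plan is to prove the equivalent statement that $I_K(v_1)\le I_K(v_2)$ whenever $0<v_1<v_2$ (the value $I_K(0)=0$ being immediate, as the empty set competes). First I would fix $\eps>0$ and choose a competitor $E\subseteq\rr^d$ with $|E|=v_2$ and $P_K(E)\le I_K(v_2)+\eps$, and then reduce its volume down to $v_1$ by intersecting it with sub-Riemannian balls centred at the origin. Setting $m(r):=|E\cap B_r|$, the function $m$ is continuous and non-decreasing with $m(0^+)=0$ and $m(r)\to v_2$ as $r\to\infty$, so by the intermediate value theorem there are radii at which $m$ takes the value $v_1$. The two tools that control the cost of such a cut are the decomposition \eqref{des:decom}, which together with \eqref{eq:abscont} yields $P_K(E\cap B_r)\le P_K(E)+C_1\,P(E\cap B_r;\partial B_r)$, and the volume--perimeter inequality \eqref{eq:volper}, which gives $P(E\cap B_r;\partial B_r)\le m'(r)$ for a.e.\ $r$. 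Hence for a.e.\ $r$,
\[
P_K(E\cap B_r)\le P_K(E)+C_1\,m'(r).
\]

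The second step is to locate a radius at which this cut is cheap while the resulting volume is essentially $v_1$. Since $m$ is monotone it is differentiable a.e.\ and $\int_0^\infty m'(r)\,dr\le v_2<\infty$, so by a Chebyshev argument the set of radii with $m'(r)\le\eta$ has large measure for every $\eta>0$; in particular one can select a cutting radius $r$ for which the spherical slice $P(E\cap B_r;\partial B_r)$ is as small as desired. At such a radius $F:=E\cap B_r$ satisfies $P_K(F)\le I_K(v_2)+\eps+C_1\eta$ and has volume close to $v_1$. I would then correct the volume to exactly $v_1$ by removing (or adjoining far away, using the translation invariance of $P_K$ recorded after \eqref{eq:tras}) a small ball of the missing volume $\delta$; by the isoperimetric inequality for small volumes, Theorem \ref{thm:in}, the perimeter price of this correction is of order $\delta^{(l-1)/l}$ and therefore tends to $0$ as $\delta\to0$. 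Letting $\eps,\eta,\delta\to0$ would give $I_K(v_1)\le I_K(v_2)$.

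The hard part will be the simultaneous control required in the second step: the volume constraint forces the cut to occur at a radius where $m(r)=v_1$, but this radius need not be one at which the slice $P(E\cap B_r;\partial B_r)$ is small, since the ``cheap'' radii produced by the integrability of $m'$ may carry volumes far from $v_1$. The crux is thus to decouple ``small cut'' from ``correct volume'': one cuts at a cheap radius, accepting a volume mismatch $\delta$, and repairs it with the small-volume isoperimetric inequality, the delicate point being to guarantee that $\delta$ can be made small while keeping the slice small. This is where concentration of $E$ in a thin spherical shell around the cutting radius is the enemy, and controlling it is the real content of the argument; I expect it to rely on the a.e.\ differentiability and integrability of $m$ established above together with the relative isoperimetric inequality of Theorem \ref{thm:relin}, which prevents a near-minimizer from piling up volume in such a shell without paying in perimeter.
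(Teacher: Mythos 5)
There is a genuine gap, and it sits exactly where you located it: the step that requires a cutting radius at which the slice $P(E\cap B_r;\partial B_r)$ is small \emph{and} the enclosed volume $m(r)$ is close to $v_1$. These two requirements are incompatible in general, already for the most natural competitor. If $E$ is (comparable to) a metric ball of volume $v_2$, then $m'(r)$ is bounded \emph{below} by a positive constant on the entire range of radii where $m(r)$ is of order $v_1$; the Chebyshev argument does produce radii with $m'(r)\le\eta$, but by \eqref{eq:ball} these all occur where $m(r)=O(\eta^{l/(l-1)})$ (or where $m(r)$ is already essentially $v_2$), so the volume mismatch $\delta=v_1-m(r)$ stays of order $v_1$ as $\eta\to 0$. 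Repairing a mismatch of order $v_1$ via Theorem \ref{thm:in} costs a fixed amount $\sim v_1^{(l-1)/l}$ of perimeter, which does not vanish, so the argument does not close. The relative isoperimetric inequality of Theorem \ref{thm:relin} cannot rescue this: a near-minimizer is under no obligation to avoid having $m'$ bounded below near the level $v_1$ --- that is the generic situation, not a degenerate one --- so there is no ``shell concentration'' to rule out. (A further caution: any repair that leans on Lemma \ref{lem:ext} or on boundedness of minimizers would be circular, since those results are proved \emph{using} Proposition \ref{lem:non}.)

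The paper avoids the problem by making the cut \emph{free} rather than merely cheap. One picks a horizontal direction $X_1\in X\setminus\ag_1$, foliates $\rr^d$ by the hypersurface $S$ integrating the orthogonal distribution (Frobenius), and left-translates the near-minimizer $E_n$ so that exactly $|\ell_{p_n}E_n\cap S^{+}|=w$; no volume correction is ever needed. The new boundary created along the flat cut is then controlled by a calibration: applying the divergence theorem \eqref{eq:div} to the constant (hence divergence-free) vector field $U\in\Pi_K(X_1)$ on $E_n\cap S^{-}$ gives $P_K(E_n\cap S^{+};S)\le P_K(E_n;S^{-})$, i.e.\ the perimeter of the cut face is at most the perimeter of the piece being discarded. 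Adding $P_K(E_n;S^{+})$ yields $P_K(E_n\cap S^{+})\le P_K(E_n)=I_K(v)+\tfrac1n$ with no error terms at all. If you want to keep a slicing-type proof, this is the mechanism you need: replace ``the slice is small for some radius'' by ``the slice is dominated by what you throw away,'' which is exactly what the calibration provides and what ball slicing cannot.
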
	

\begin{proof}
Since $X$ is a bracket generating system, $X$ spans $\ag\setminus\ag_1$ and therefore $X\setminus\ag_1\neq \emptyset$. Let $X_1\in X\setminus\ag_1$ and $\mathcal{D}$ be the distribution orthogonal to $X_1$. Then $\mathcal{D}$ is involutive and by Frobenius' Theorem, there exists a hypersurface $S$ passing through $0$ with $ TS=\mathcal{D}$. Moreover, $S$ is orientable. Let $S^+$ and $S^-$ the open regions in $\rr^d$ with boundary $S$ and horizontal normal vectors $X_1$ and $-X_1$ respectively.
%	First notice that it is not restrictive to assume that the vector field $Y_1$ defined by left-invariance of a Malcev basis (Lemma \ref{seq}) is horizontal. Indeed, up to a realignment, $X_1(0)\in\ag\setminus\ag_1$ and we can form a Malcev basis with $X_1(0)$. Therefore, the product 
	
Fix $v>w>0$ and let $E_n\subseteq \rr^d$ such that $|E_n|=v$ and $P_K(E_n)=I_K(v)+\frac{1}{n}$. %Let us define the following sets $P:=\{x_1=0 \}$, $P^-:=\{x_1<0\}$, $P^+:=\{x_1>0 \}$ and  $E_n^s:=|l_{-(s,0,\ldots,0)}E_n\cap P^- |$. We take $s_n$ verifying that $|E_n^{s_n}|=w$. Since $X_1|_{P}=\frac{\partial}{\partial x_1}$ is constant, we can take 
Let $p_n$ be such that $|\ell_{p_n}E_n\cap S^+|=w$. By abuse of notation, we will write $E_n$ and $E_n^-$ for $\ell_{p_n}E_n$ and $\ell_{p_n}E_n\cap S^-$ respectively.
% Since $X_1|_{P}=\frac{\partial}{\partial x_1}$ is constant, we can take $\pi_K\big(\frac{\partial}{\partial x_1}\big)\in\Pi(\frac{\partial}{\partial x_1})$ constant in $P$, and $\pi_K(\frac{\partial}{\partial x_1})=\sum_i a_iX_i|_P$ and define the horizontal vector field $U=\sum_i a_iX_i$.  
%Since $\pi_K(X_1)$ has constant coordinates in $X$,
Let $U\in\Pi_K(X_1)$ be a projection over $K$, that is, satisfying Equation \eqref{eq:proy}. Applying \eqref{eq:div} to $E_n^-$ and $U$, we get
\begin{equation*}
\int_{E_n^-}\divv U dx=-\int_{E_n\cap S}\langle\pi_K(X_1) ,X_1\rangle d|\partial E_n^-|_K +\int_{ E_n^-}\langle U,\nu\rangle d|\partial E_n^-|_K. 
\end{equation*}
Since $U$ has constant coordinates in $X$, $\divv U=0$. Therefore, from \eqref{reper} we get
\begin{equation}\label{eq:cal}
P_K(E_n\cap S^+;S)=\int_{ E_n^-}\langle U,\nu\rangle d|\partial E_n^-|_K\leq P_K(E_n;S^-). 
\end{equation}
Adding $P_K(E_n;S^+)$ to both sides of Equation \eqref{eq:cal}, we get
\begin{equation*}
I_K(w)\leq P_K(E_n\cap S^+)\leq P_K(E_n)=I_K(v)+\frac{1}{n}.\qedhere
\end{equation*}
\end{proof}

We shall need the following Lemma proven in \cite{MR2000099} for Carnot groups.
\begin{lemma}\label{lem:ext}
	Let $E$ be an isoperimetric region, $x\in\rr^d$ and $0<r\leq1$. Then there exists $\varepsilon>0$ such that  if $r^{-l}| B(x,r)\setminus E|\leq \varepsilon$, then
\[
| B(x,r/2)\setminus E|=0,
\]	
where is $B(x,r)$ the sub-Riemannian ball centered in $x$ of radius $r>0$.
\end{lemma}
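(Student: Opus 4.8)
The plan is to follow the standard density-estimate / De Giorgi iteration scheme adapted to the sub-Finsler setting, exploiting the relative isoperimetric inequality (Theorem \ref{thm:relin}) together with the perimeter decomposition \eqref{des:decom} and the volume-perimeter comparison \eqref{eq:volper}. The key idea is that if the defect $|B(x,r)\setminus E|$ of the isoperimetric region inside a small ball is sufficiently small, then comparing $E$ with the competitor $E\cup B(x,s)$ for radii $s\in(r/2,r)$ forces a differential inequality on the function $m(s):=|B(x,s)\setminus E|$, which can only be satisfied if $m\equiv 0$ on $(r/2,r)$.

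\medskip
\emph{Step 1 (setting up the competitor).} For a.e.\ $s\in(r/2,r)$ I would consider the set $F_s:=E\cup B(x,s)$, which has volume $|E|+m(s)$. Since $E$ is isoperimetric, $P_K(E)\le I_K(|E|+m(s))$ is not directly useful; instead I compare perimeters directly. Using the decomposition \eqref{des:decom} applied to $B(x,s)$, and the fact that adding $B(x,s)\setminus E$ to $E$ removes the relative boundary of $E$ inside $B(x,s)$ while adding at most the part of $\partial B(x,s)$ lying outside $E$, one obtains an estimate of the form
\begin{equation*}
P_K(E)\le P_K(F_s)\le P_K(E)-P_K(B(x,s)\setminus E;\,\intt B(x,s))+2C_1\,P(B(x,s)\setminus E;\,\partial B(x,s)),
\end{equation*}
where the isoperimetric minimality of $E$ (or rather the monotonicity from Proposition \ref{lem:non}, since $m(s)$ is small) is what lets me discard the volume change. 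This yields
\begin{equation}\label{eq:step1}
P_K(B(x,s)\setminus E;\,\intt B(x,s))\le 2C_1\,P(B(x,s)\setminus E;\,\partial B(x,s)).
\end{equation}

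\medskip
\emph{Step 2 (isoperimetric inequality on the defect).} I apply the relative isoperimetric inequality \eqref{eq:relin} to the set $B(x,s)\setminus E$ inside the ball. For $s\le r$ and $m(s)$ small (so that $m(s)\le\tfrac12|B(x,s)|$), the minimum in \eqref{eq:relin} is realized by $m(s)$ itself, giving
\begin{equation*}
C\,m(s)^{(l-1)/l}\le P_K(B(x,s)\setminus E)\le P_K(B(x,s)\setminus E;\intt B(x,s))+C_1\,P(B(x,s)\setminus E;\partial B(x,s)).
\end{equation*}
Combining with \eqref{eq:step1} and then invoking \eqref{eq:volper}, which bounds $P(B(x,s)\setminus E;\partial B(x,s))$ by $-m'(s)$, produces the differential inequality
\begin{equation*}
C\,m(s)^{(l-1)/l}\le \tilde C\,(-m'(s))\qquad\text{for a.e. }s\in(r/2,r).
\end{equation*}

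\medskip
\emph{Step 3 (integration and the smallness threshold).} Integrating the inequality $m^{(l-1)/l}\le -c\,m'$ from $r/2$ to $r$ shows that $m(s)^{1/l}$ decreases at a definite linear rate, so that $m(r/2)^{1/l}\le m(r)^{1/l}-c^{-1}(r-r/2)\cdot(\text{const})$; the smallness hypothesis $r^{-l}m(r)\le\varepsilon$ with $\varepsilon$ chosen small enough (depending only on $l$, $C$, $C_1$ and the doubling constant $\alpha$ from \eqref{eq:ball}) forces the right-hand side to vanish, whence $m(r/2)=0$. The threshold $\varepsilon$ is precisely what makes the initial defect too small to survive the linear decay over the interval of length $r/2$.

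\medskip
\emph{Main obstacle.} The delicate point will be Step 1: justifying that the isoperimetric minimality of $E$ (or the non-decreasing property of $I_K$) legitimately allows me to absorb the volume gain $m(s)$ without an error term, and correctly accounting, in the asymmetric sub-Finsler setting, for which pieces of boundary are created and destroyed when passing from $E$ to $E\cup B(x,s)$. Because $\|\cdot\|_K$ is \emph{not} symmetric, the perimeter contributions from the two sides of the relative boundary need not be equal, so I must be careful to use the two-sided comparison \eqref{eq:abscont} to control $P_K$ by the symmetric sub-Riemannian perimeter $P$ on $\partial B(x,s)$ (which is where the constant $2C_1$ originates) rather than assuming any cancellation. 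Everything else is a routine Gronwall-type integration once \eqref{eq:step1} is in hand.
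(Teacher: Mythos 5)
Your proposal follows essentially the same route as the paper's proof: compare $E$ with the competitor $E\cup B(x,s)$, invoke the monotonicity of the profile (Proposition \ref{lem:non}) to get $P_K(E)\le P_K(E\cup B(x,s))$, combine the perimeter decomposition with \eqref{eq:volper} and an isoperimetric inequality applied to $B(x,s)\setminus E$ to obtain a differential inequality for $m(s)$, and integrate over $(r/2,r)$ to contradict $m(r)\le\varepsilon r^l$. The only blemish is a sign slip: since $m(s)=|B(x,s)\setminus E|$ is non-decreasing, \eqref{eq:volper} (applied to $E^c$) bounds the boundary term by $+m'(s)$, not $-m'(s)$, so the correct inequality is $C\,m(s)^{(l-1)/l}\le \tilde C\,m'(s)$ --- which is in fact the one your Step 3 integration actually uses, so nothing essential changes.
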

\begin{proof}
Let $|E|=v$. Suppose that $r^{-l}|B(x,r)\setminus E|\leq\varepsilon$. Let $t>0$, $B:=B(x,t)$, $E_t:=E\cup B$ and $m(t):=|B(x,t)\setminus E|$. Then
\begin{equation}\label{eq:1}
\begin{split}
P_K(E_t)&=P_K(E_t;\setminus\bar{B})+P_K(E\cup B;\partial B)\\
&\leq P_K(E)-P_K(E;B)+P_K(E\cup B;\partial B).
\end{split}
\end{equation}
On the other hand, using Equation \eqref{eq:volper} and that $-|E^c\setminus B|'=m'(t)$, we get
\begin{equation}\label{eq:2}
%\begin{split}
P_K(E\cup B;\partial B)\leq C_1 P(E\cup B;\partial B)= C_1 P(E^c\setminus B;\partial B)\leq C_1m'(t).
%&\leq -C_1\frac{d}{dt}(|E^c\setminus B|)'\\
%&=C_1\frac{d}{dt}(|E^c\cap B|)'\\ 
%&=C_1\frac{d}{dt}(|B|-|E\cap B|)'\\
%&=C_1 m(t)'.
%\end{split}
\end{equation}
Using again Equation \eqref{eq:volper},
\begin{equation}\label{eq:3}
P_K(E;B)=P_K(B\setminus E)-P_K(B\setminus E;\partial B)\geq P_K(B\setminus E)-m'(t).
\end{equation}
Since $|E_t|\geq |E|$ and $E$ is an isoperimetric region, Proposition \ref{lem:non} gives us
\begin{equation}\label{eq:4}
P_K(E)\leq P_K(E_t).
\end{equation}
Substituting \eqref{eq:2}, \eqref{eq:3} and \eqref{eq:4} in \eqref{eq:1}, it follows that
\[
P_K(B\setminus E)\leq (1+C_1)m'(t).
\]
From the Isoperimetric Inequality \eqref{eq:in},
\begin{equation}\label{eq:pre}
Cm(t)^{l-1/l}\leq m'(t).
\end{equation}
The above inequalities holds for a.e. $t>0$. Suppose that $m(t)>0$ for all $t\in[r/2,r]$, otherwise there is nothing to prove. Then we can rewrite Inequality \eqref{eq:pre} as
\[
C\leq \frac{m'(t)}{m(t)^{l-1/l}},
\]
and integrating between $r/2$ and $r$,
\[
r\leq C(m(r)^{1/l}-m(r/2)^{1/l})\leq Cm(r)^{1/l}\leq C\varepsilon ^{1/l}r.
\]
This is impossible for $\varepsilon$ small enough, and we get a contradiction. Therefore $|B(x,r/2)\setminus E|=0$.
\end{proof}

 \begin{remark}
	Reasoning as in , we could use $D$ in \eqref{in:doub} qq and, since the Poincaré inequality \eqref{des:Poincare} is global, we would obtain a isoperimetric inequality
	\begin{equation}
	C_2|F|^{D-1/D}\leq P_K(F)
	\end{equation}
	for all $F\subseteq \rr^d$ is any finite perimeter set and finite volume. Nevertheless, the lack of sharpness of $D$ in Lemma \ref{lem:ext} would not allow us to prove the following result.
\end{remark}

\begin{corollary}\label{cor:open} 
Let $E$ be an isoperimetric region. Then there exists an open set $E_0$ that coincides with $E$ almost everywhere.
\end{corollary}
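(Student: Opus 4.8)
The plan is to exhibit $E_0$ as a union of balls on which Lemma \ref{lem:ext} forces full density, and then to use the Lebesgue differentiation theorem to check that this union captures almost all of $E$. Concretely, let $\varepsilon>0$ be the constant provided by Lemma \ref{lem:ext} (its proof shows that $\varepsilon$ may be taken independent of $x$ and $r$, since it only depends on the uniform isoperimetric constant in \eqref{eq:in}), and set
\[
E_0:=\bigcup\Big\{B(x,r/2): x\in\rr^d,\ 0<r\le 1,\ r^{-l}\,|B(x,r)\setminus E|\le\varepsilon\Big\}.
\]
Being a union of open sub-Riemannian balls, $E_0$ is open, so it only remains to show that $E_0$ and $E$ differ by a null set.

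For the inclusion ``$E_0\subseteq E$ up to a null set'' I would observe that every ball $B(x,r/2)$ appearing in the union satisfies the hypothesis $r^{-l}|B(x,r)\setminus E|\le\varepsilon$, so Lemma \ref{lem:ext} gives $|B(x,r/2)\setminus E|=0$. The one subtlety here --- and the step I expect to require the most care --- is that the union is a priori uncountable, so countable additivity does not immediately yield $|E_0\setminus E|=0$. This is resolved by the second countability of $\rr^d$: the open set $E_0$ is Lindelöf, hence equals a countable subunion of the same balls, say $E_0=\bigcup_n B(x_n,r_n/2)$, and therefore $|E_0\setminus E|\le\sum_n|B(x_n,r_n/2)\setminus E|=0$.

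For the reverse inclusion ``$E\subseteq E_0$ up to a null set'' I would invoke the Lebesgue differentiation theorem, which is available because the measure is doubling by \eqref{in:doub}; thus almost every $x\in E$ is a density-one point, i.e. $|B(x,r)\setminus E|/|B(x,r)|\to 0$ as $r\to 0$. Combining this with the upper bound $|B(x,r)|\le\alpha\,r^l$ for $0<r\le 1$, which follows from \eqref{eq:ball} together with the left-invariance of the Haar measure (so that $|B(x,r)|=|B(0,r)|$), gives
\[
r^{-l}\,|B(x,r)\setminus E|\le\alpha\,\frac{|B(x,r)\setminus E|}{|B(x,r)|}\longrightarrow 0\quad\text{as }r\to 0 .
\]
Hence for every such density point $x$ there is a small $r$ with $r^{-l}|B(x,r)\setminus E|\le\varepsilon$, whence $x\in B(x,r/2)\subseteq E_0$. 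Since the set of non-density points of $E$ is null, this shows $|E\setminus E_0|=0$, and combined with the previous step it follows that the open set $E_0$ coincides with $E$ almost everywhere.
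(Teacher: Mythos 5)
Your proof is correct and follows essentially the route the paper intends: the corollary is stated as a direct consequence of Lemma \ref{lem:ext} combined with the Lebesgue density theorem, and the open representative you build is (up to a null set) the set $E_1=\{x:\exists r>0,\ |B(x,r)\setminus E|=0\}$ that the paper introduces before Theorem \ref{thm:boundary}. Your two extra observations --- that $\varepsilon$ in Lemma \ref{lem:ext} is uniform in $x$ and $r$, and that the Lindel\"of property reduces the uncountable union to a countable one --- are exactly the details the paper leaves implicit.
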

\begin{remark}\label{rem:open}
	From now on we shall assume that an isoperimetric region $E$ is exactly $E_1$, and therefore an open set.
\end{remark}

The following Lemma allows us to deform a finite perimeter set $F$ increasing the volume so that the variation of the $K$-perimeter is controlled. For that, we shall take a cylinder $W^\lambda$ in $F^c$, which is located above the boundary of $F$ (see Figure 1). The upper cover shall be a translation by $(0,\ldots,0,s)$ of the lower cover, and thus have the same $K$-perimeter. The points in the lower cover are points of density $1$. Therefore the only part that is contributing to add perimeter is the contour of the cylinder, which can be bounded by the Riemannian one. On the other hand, the volume of $W^\lambda$ can  be computed by Fubini's Theorem.
\begin{lemma}[Deformation Lemma]\label{lem:def}
	Let $(\rr^d,\cdot,X,K)$ be a sub-Finsler nilpotent group. Let $F\subseteq \rr^d$ be a finite perimeter and finite volume set and suppose that there exists $p\in\text{int}(F)$. Then there exists $C_3>0$, $\lambda_0>0$, $\lambda_1>0$ and a family of finite perimeter sets $\{F^\lambda\}_{-\lambda_0<\lambda<\lambda_1}$ such that $|F^\lambda|=|F|+\lambda$ and
	\begin{equation}\label{eq:def}
	P_K(F^\lambda)-P_K(F)\leq C_3 |\lambda|.
	\end{equation}
	Moreover, if $F$ is bounded then $\lambda_1=+\infty$.
\end{lemma}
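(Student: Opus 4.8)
The plan is to realize the deformation concretely as gluing (for $\lambda>0$) or carving out (for $\lambda<0$) a vertical cylinder over the ``top'' of $F$, exploiting that translations in the $x_d$-direction are simultaneously group and Euclidean translations \eqref{eq:tras} and leave $P_K$ invariant (the displayed identity recorded after \eqref{eq:tras}). Write points of $\rr^d$ as $(y,t)$ with $y\in\rr^{d-1}$ and $t=x_d$, and set $F_y=\{t:(y,t)\in F\}$. Since $p=(y_0,t_0)\in\intt(F)$, there is $\rho>0$ with $\overline{B^{d-1}(y_0,\rho)}\times[t_0-\rho,t_0+\rho]\subseteq F$; put $D=B^{d-1}(y_0,\rho)$. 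Because $|F|<+\infty$, Fubini gives $|F_y|<+\infty$ for a.e.\ $y$, so the first-exit height $m(y):=\inf\{t>t_0:\ (y,t)\text{ has density}<1\text{ in }F\}$ is finite a.e.; by construction the whole segment $\{y\}\times(t_0,m(y))$ lies in $F$ with density one. The graph $\{t=m(y)\}$ is the \emph{lower cover}: its points are density-$1$ points of $F$ and, having $F$ of density $<1$ just above, carry the perimeter measure of $F$ with upward normal.

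For $\lambda>0$ I would set $W^{s}=\{(y,t):y\in D,\ m(y)<t<m(y)+s\}$ and $F^{\lambda}=F\cup W^{s}$. Choosing $s$ small enough that $(m(y),m(y)+s)\cap F_y=\varnothing$ on $D$ (possible on a slightly smaller disk after an Egorov-type argument, and for \emph{every} $s>0$ when $F$ is bounded, taking $m(y)$ to be the essential supremum of $F_y$), the cylinder lies in $F^{c}$ and Fubini yields $|F^{\lambda}|=|F|+s\,|D|$, so $\lambda=s|D|$. For $\lambda<0$ one removes a cylinder of height $s$ from directly below the lower cover, $F^{\lambda}=F\setminus\{(y,t):y\in D,\ m(y)-s<t<m(y)\}$ with $s\le\inf_{D}(m(y)-t_0)$; here the solid column $\{y\}\times(t_0,m(y))\subseteq F$ guarantees the removal is clean, and again $|F^{\lambda}|=|F|-s|D|$. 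This fixes $\lambda_0,\lambda_1>0$, with $\lambda_1=+\infty$ in the bounded case.

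To estimate the perimeter I would split $\rr^{d}$ along the partition $\{D\times\rr,\ \partial D\times\rr,\ (\rr^{d-1}\setminus\overline D)\times\rr\}$ and use additivity of the measure in \eqref{reper} exactly as in the derivation of \eqref{des:decom}. Off $\overline D\times\rr$ nothing changes, so those contributions coincide. Over the open cylinder $D\times\rr$ the reduced boundaries of $F$ and $F^{\lambda}$ agree below the lower cover; the lower cover, being glued to $W^{s}$, becomes interior to $F^{\lambda}$ and leaves $\partial^{*}F^{\lambda}$, while the \emph{upper cover} $\{t=m(y)+s\}$ enters it. As the upper cover is the $x_d$-translate by $s$ of the lower cover and $P_K$ is translation invariant, these two contributions are equal, whence $P_K(F^{\lambda};D\times\rr)\le P_K(F;D\times\rr)$. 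On the wall $\partial D\times\rr$ the only new boundary is the lateral surface of $W^{s}$; by submodularity of the perimeter measure together with \eqref{eq:abscont} it is bounded by $C_1\,P(W^{s};\partial D\times\rr)\le C\,\mathcal H^{d-2}(\partial D)\,s$, a vertical strip of height $s$. Summing the three pieces gives $P_K(F^{\lambda})-P_K(F)\le C\,s=C_3\,|\lambda|$ with $C_3=C/|D|$, which is \eqref{eq:def}; the same bookkeeping, with top faces replaced by bottom faces, settles $\lambda<0$.

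The main obstacle is the rigorous measure-theoretic bookkeeping inside the cylinder: justifying that the lower cover is a genuine portion of $\partial^{*}F$, that gluing (resp.\ removing) $W^{s}$ turns it into an interior (resp.\ exterior) density set while the upper cover carries exactly the translated perimeter, so that the two horizontal contributions cancel and only the lateral $O(s)$ term survives. Two further technical points require care: choosing the radius $\rho$ so that $\partial D\times\rr$ carries no concentrated $K$-perimeter of $F$ (a good-radius/coarea argument), and, for unbounded $F$, ruling out re-entry of $F$ just above $m(y)$ uniformly on $D$ — which is precisely what forces $\lambda_1<+\infty$ in general while permitting $\lambda_1=+\infty$ when $F$ is bounded.
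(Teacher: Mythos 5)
Your overall strategy is the paper's: glue (or carve out) a vertical cylinder whose top face is the $x_d$-translate of its bottom face, use \eqref{eq:tras} and the left-invariance of $P_K$ to cancel the two horizontal contributions, bound the lateral surface by $C_1$ times a Riemannian area of order $s$, and compute the volume by Fubini; the treatment of $\lambda<0$ and of the bounded case also matches. However, there is a genuine gap in where you anchor the cylinder. You define $m(y)$ as the first height above $t_0$ at which the density of $F$ drops below $1$ and then claim that, after shrinking the disk, some $s>0$ satisfies $(m(y),m(y)+s)\cap F_y=\emptyset$ for all $y\in D$. This can fail for \emph{every} $s>0$ and almost every $y$: take $F$ to be a box $D'\times[0,1]$ together with slabs $D'\times[1+\tfrac{1}{n+1},\,1+\tfrac1n-\delta_n]$ accumulating at height $1$ from above (a bounded, finite-perimeter, finite-volume set). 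Then $m\equiv 1$ on $D'$ but $F$ re-enters immediately above $m(y)$ for every $y$, so the quantity $g(y)=\sup\{s:|F_y\cap(m(y),m(y)+s)|=0\}$ vanishes identically and no Egorov-type selection can produce the required $s$; moreover Egorov would only yield a positive-measure subset of $D$, not a subdisk, whereas your lateral estimate needs $\mathcal H^{d-2}(\partial D)$ of an actual disk. When the cylinder meets $F$ in positive measure, both conclusions break: $|F\cup W^s|<|F|+s|D|$, and the cancellation $P_K(F^\lambda;S_{\sup})=P_K(F;S_{\inf})$ fails because $F$ contributes extra essential boundary inside and above $W^s$. (Your fallback of taking $m(y)=\operatorname{ess\,sup}F_y$ only applies to bounded $F$, and even there the bottom-face cancellation needs care, since $(y,m(y))$ need not be a density-one point of $F$.)

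The paper closes exactly this hole by approaching the boundary from the \emph{other side}: since $|F|<\infty$, the infinite vertical column $C(p,r)$ over the interior ball cannot be contained in $F\cup\partial F$, so it contains a point $p'\in\text{int}(F^c)$; one then descends from the level of $p'$ and takes the \emph{first} intersection $p_x$ of each downward ray with $\partial F$. The open segment between the level of $p'$ and $p_x$ is connected, avoids $\partial F$, and contains a point of $\text{int}(F^c)$, hence lies entirely in $\text{int}(F^c)$; consequently the cylinder $W^\lambda$ built on the graph $\{p_x\}$ is genuinely disjoint from $F$ (up to the null bottom face) as long as it stays below the $p'$-level, which is what determines $\lambda_1$ in the unbounded case. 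If you replace your first-exit-from-inside graph by this first-hit-from-outside graph, the rest of your bookkeeping (translation invariance for the two horizontal faces, $O(s)$ lateral term, Fubini, iteration over the ball $B(p,r)$ when $F$ is bounded to get $\lambda_1=+\infty$) goes through as you describe.
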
	
\begin{proof}
	We set the notation $\phi(x)=(x_1,\ldots,x_{d-1})$,  $B(x,r)$ is the Euclidean ball of center $x$ and radius $r$, $D(x,r)=\phi(B(x,r))$, $C(x,r)=D(x,r)\times\rr$ and $v=(0,\ldots,0,1)$. Let $B(p,r)\subseteq F$. We claim that there exists $p'\in\text{int}(F^c)\cap C(p,r)$. Otherwise
	 \begin{equation*}
	 \begin{split}
	 |C(p,r)|&=|\text{int}(F)\cap C(p,r)|+|\partial F\cap C(p,r)|\\
	 &=|\text{int}(F)\cap C(p,r)|\leq|\text{int}(F)|=|F|\leq\infty,
	 \end{split}
 	\end{equation*}
	 which is a contradiction. Up to moving $p$ and taking a smaller $r$, we can assume that $p'-p\in\{(0,\ldots,0)\}\times\rr$ and denote $D=D(p',r/3)$.
	
%	 By Corollary 2.3.6 in \cite{MR1437714}, we can take a sequence of open sets $F_n\subseteq \rr^d$  with $C^\infty$ boundary such that $F_n$ converges to $ F$ in $L^1$, $P_K(F_n)$ converges to $P_K(F)$ and $\partial F_n\subseteq N_{1/n}(\partial F)$, where $N_r(\partial F):=\{x\in\rr^d : d(x,\partial F)\leq r \}$ and $d$ is the sub-Riemannian distance. 
%	 We can suppose that $B(p,r/2)\subseteq F_n$ and $B(p',r/2)\cap F_n=\emptyset$.
%	 Setting 
%	 $$\tilde{F}_n:=(F_n\cup B(p,r/2))\backslash B(p',r/2),$$
%	 let us see that $\partial \tilde{F}_n=\partial F_n$ for $n$ big enough, and so $P_K(\tilde{F}_n)=P_K(F_n)$ and $\tilde{F}_n\to F$ in $L^1$. Let $n_1>0$ such that $\partial F_n\subseteq N_{r/4}(\partial F)$ for any $n>n_1$. Then 
%	 $$d(q,\partial F_n)\geq r/4\qquad \forall q\in\partial B(p,r/2),$$
%	 and $q\in\text{int}(F_n)$ or $q\in \text{ext}(F_n)$. In case that $q\in \text{ext}(F_n)$, then $B(q,r/4)\subseteq \text{ext}(F_n)$ and $B(q,r/4)\subseteq \text{int}(F)$, which is a contradiction with the $L^1$ convergence. Therefore $\partial B(p,r/2)\subseteq\text{int}(F_n)$ and $\partial (F_n\cup B(p,r/2))=\partial F_n$. Analogously, $\partial B(p',r/2)\subseteq\text{ext}(F_n)$ for $n$ big enough and $\partial\tilde{F}_n=\partial F_n$. In the following we shall denote $\tilde{F}_n$ as $F_n$. 
		 
Let $L(x):=\{x-tv:0<t<t_0 \}$, where $x\in D\times \{p'_d\}$. The set $L(x)\cap\partial F$ is non-empty since $x\in F^c$ and $x-t_0 v\in F$. The boundary $\partial F$ is a closed set, and in  $L(x)$ is compact, so there exists a first point of intersection $p_x$. 

For any $0<\lambda$ we define $C^\lambda$ as
	$$W^\lambda:=\{p_x+tv: 0\leq t<\lambda /\mathfrak{L}^2(D),\ x\in D\},$$
	where $\mathfrak{L}^2$ is the two dimensional Lebesgue measure. Notice that the set $D\times(p'_d-\frac{\sqrt{3}}{2},p'_d+\frac{\sqrt{3}}{2})\subseteq B_{p'}(r/2)\subseteq F^c$, and let us denote by $v_M$ its volume.	%Therefore, if $v<v_M$ then $C^\lambda\subseteq  F^c$.
	Since $|W^\lambda|\leq v_M$, we can apply Fubini's Theorem to calculate the volume of $W^\lambda$ integrating the highs first, and we get that  $W^\lambda$ has volume $\lambda$.  
	
	\begin{figure}[h]\label{figure:1}
	\includegraphics[width=0.45\textwidth]{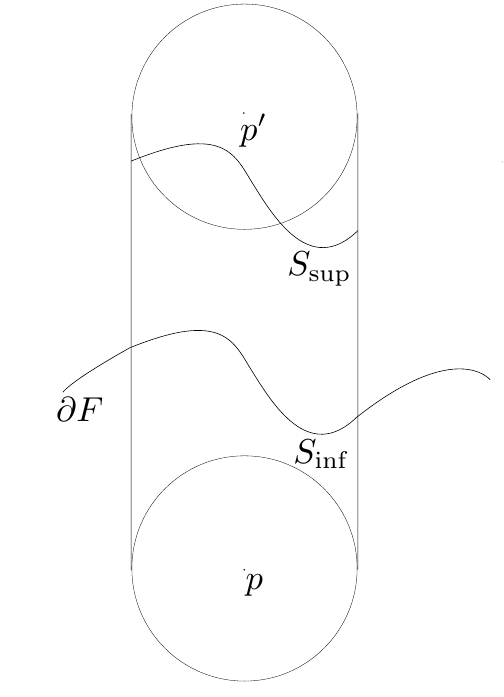}
		\caption{}
	\end{figure}

%	\begin{figure}[h]
%		\includegraphics[width=0.5\textwidth]{dibujo.pdf_tex}
%		\caption{The Pansu-Wulff sphere associated to the norm $\norm{\cdot}_a$ with $a=(1,1.5)$. Observe that the projection to the horizontal plane $t=0$ is an ellipse with semiaxes of lengths $2$ and $3$.}
%	\end{figure}
%	
	Let $F^\lambda:=F\cup W^\lambda$. Then $W^\lambda\cap F=\emptyset$ and $|F^\lambda|=|F|+\lambda$.  We define the sets
	 \begin{equation*}
	 \begin{split}
	 &S_{\inf}:=\{p_x : x\in D \};\\
	 &S_{\sup}:=\Big\{p_x+\frac{\lambda}{|D|}: x\in D  \Big\};\\
	 &S_{\text{cont}}:=\Big\{ p_x+t : x\in \partial D; 0<t<\frac{\lambda}{|D|}  \Big\}.
	 \end{split}
	 \end{equation*}
	 It is clear that $\partial W^\lambda=S_{\inf}\cup S_{\sup}\cup S_{\text{cont}}$. We calculate the perimeter of $F^\lambda$: %The set $F^\lambda$ is $F$ out of the closure of $C^\lambda$, and therefore
	\begin{equation}\label{p}
	\begin{split}
	P_K(F^\lambda)&=P_K(F^\lambda;\rr^d\setminus \bar{W^\lambda})+P_K(F^\lambda;\text{int}(W^{\lambda}))+P_K(F^\lambda;\partial W^{\lambda})\\
	 &=P_K(F;\rr^d\setminus \bar{W^\lambda})+P_K(F^\lambda;S_{\inf})+P_K(F^\lambda;S_{\sup})+P_K(F^\lambda;S_{\text{cont}}).
	\end{split}
	\end{equation} 
	For any $p\in S_{\inf}$, $p\in \text{int}(F^\lambda)$ and thus $P_K(F^\lambda;S_{\inf})=0$. Since the $K$-perimeter is invariant by vertical translations, 
	$$P_K(F;\bar{C}^\lambda)=P_K(F;S_{\inf})=P_K(W^\lambda;S_{\sup})=P_K(F^\lambda;S_{\sup}).$$
	Then
	\begin{equation}\label{eq:con}
	P_K(F^\lambda)=P_K(F)+P_K(F^\lambda;S_{\text{cont}}).
	\end{equation}
	From $\partial F^\lambda\cap S_{\text{cont}}= S_{\text{cont}}$, Inequality \eqref{eq:abscont} and the fact that the sub-Riemannian perimeter $P$ is less than the Riemannian one, $P_r$, we get
	 \begin{equation}\label{Pcont}
	\begin{split} 
	P_K(F^\lambda;S_{\text{cont}})&= P_{K}(W^\lambda;S_{\text{cont}})\\
	&\leq C_1 P(W^\lambda;S_{\text{cont}})\\
	&\leq C_1 P_{r}(W^\lambda;S_{\text{cont}})\\
	&=:C\lambda,
	\end{split}
	\end{equation}	
	where $P_{r}$ is the Riemannian perimeter. Substituting \eqref{Pcont} in \eqref{eq:con}, we get
	\[
	P_K(F^\lambda)-P_K(F)\leq C\lambda.
	\]
	
%	Fixed $\lambda$, by compactness, $F_n^\lambda$ converges to a set $F^\lambda$ in $L^1_{loc}$.
%		Let $R>0$ be such that $\bar{B}(0,R)$  contains $B(p,r)$ and $B(p',r)$. Then $F_n^\lambda\setminus \bar{B}(0,R)=F_n\setminus \bar{B}(0,R)$ and $F^\lambda\setminus \bar{B}(0,R)=F\setminus \bar{B}(0,R)$. 
%	\begin{equation*}
%		\begin{split}
%			\int_{\rr^d}|\chi_{F^\lambda_n}-\chi_{F^\lambda}|
%			&=\int_{\bar{B}(0,R)}|\chi_{F^\lambda_n}-\chi_{F^\lambda}|+\int_{\rr^d\setminus \bar{B}(0,R)} |\chi_{F^\lambda_n}-\chi_{F^\lambda}|\\
%			&=\int_{\bar{B}(0,R)}|\chi_{F^\lambda_n}-\chi_{F^\lambda}|+\int_{\rr^d\setminus \bar{B}(0,R)} |\chi_{F_n}-\chi_{F}|.
%		\end{split}
%	\end{equation*}
%	Therefore $F^\lambda_n$ converge to $F^\lambda$ in $L^1$. In particular, 
%	$$|F^\lambda|=\lim_{n\to\infty}|F^\lambda_n|=|F|+\lambda.$$
%	On the other hand, using the lower semi-continuity of the $K$-perimeter and that $P_K(F_n)\to P_K(F)$
%	\[
%	P_K(F^\lambda)-P_K(F)\leq\liminf_{n\to\infty}P_K(F_n^\lambda)-P_K(F_n)\leq C\lambda.
%	\]
	
	This argument carries on subtracting a set $\tilde{W}^\lambda$ from $F$, so $|F^\lambda|=|F|-\lambda$ and $P_K(F^\lambda)-P_K(F)\leq C |\lambda|$.
%	\[
%	P_K(F_\lambda)\leq\liminf_{n\to\infty}P_K(F^n_\lambda)\leq\liminf_{n\to\infty}P_K(F^n)+\alpha\lambda=P_K(F)+\alpha\lambda
%	\]
%	\begin{equation*}
%		\begin{split}
%	P_K(E_\lambda)&=P_K(E_\lambda\cup C^c)+P_K(E_\lambda\cup \text{int}(C))+P_K(E_\lambda\cup \partial C)\\
%	&=P_K(E\cup C^c)+P_K(E\cup \text{int}(C))+P_K(E_\lambda\cup \partial C)
%		\end{split}
%	\end{equation*}

Finally, if $F$ is bounded and $B(p,r)\subseteq F$, then we can find $B(p',r)\subseteq\text{int}(F^c)\cap C(p,r)$ and apply the previous result, so that $\lambda_1\geq v_M$. We apply again the proven to the set $F^{v_M}$ taking the same ball $B(p,r)$ and, iterating, we get the result.
\end{proof}	
\begin{remark}\label{remark1}
	The constant $C_3$ depends on the radius of the ball $B(p,r)$ taken inside $F$. Therefore, if $r>0$ and $F_1$ and $F_2$ are two finite perimeter sets with finite volume such that $B(p_1,r)\subseteq F_1$ and $B(p_2,r)\subseteq F_2$ for some $p_1$ and $p_2$, then we can take $C_3>0$  satisfying \eqref{eq:def}.
\end{remark}	
\begin{remark}
	To prove Lemma \ref{lem:def}, we could think of taking a horizontal vector field $U$ and use the formulas for the first variation of the volume and  the area to bound from above the extra perimeter of the deformation by the Riemannian one and use the deformation Lemma in $\rr^d$. Nevertheless, if the Riemannian perimeter of $F\cap\text{supp}(U)$ is infinite, the upper bound would be infinite.
\end{remark}	
In Lemma  \ref{lem:ext} we proved that if we have a ball that is almost in an isoperimetric region $E$, then the ball of half the radius is in $E$.
Following again the arguments in \cite{MR2000099} and using the Deformation Lemma \ref{lem:def}, we prove in Lemma \ref{lem:int} the analog result when the starting ball is almost outside $E$.
\begin{lemma}\label{lem:int}
		Let $E$ be an isoperimetric region, $x\in\rr^d$ and $0<r\leq1$. Then there exists $\varepsilon>0$ such that  if $r^{-l}|E\cap B(x,r)|\leq \varepsilon$, then
		\[
			|E\cap B(x,r/2)|=0.
		\]	
	where is $B(x,r)$ the sub-Riemannian ball centered in $x$ of radius $r>0$.
\end{lemma}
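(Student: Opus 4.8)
The plan is to imitate the proof of Lemma \ref{lem:ext}, replacing the operation ``add the ball to $E$'' by ``remove the ball from $E$''. The one genuinely new feature is that subtracting a ball \emph{lowers} the volume, so the non-decreasing property of the profile (Proposition \ref{lem:non}) cannot be invoked in this direction; instead I would restore the lost volume by means of the Deformation Lemma \ref{lem:def}, at the cost of an extra term that will be absorbed using the smallness hypothesis. Set $|E|=v$ and suppose $r^{-l}|E\cap B(x,r)|\leq\varepsilon$. For $t\in[r/2,r]$ write $B:=B(x,t)$, $E_t:=E\setminus B$ and $m(t):=|E\cap B(x,t)|$, so that $|E\setminus B(x,s)|=v-m(s)$ and hence $-\tfrac{d}{ds}\big|_{s=t}|E\setminus B_s|=m'(t)$. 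Applying \eqref{eq:volper} to $F=E$ gives $\max\{P(E\cap B;\partial B),P(E\setminus B;\partial B)\}\leq m'(t)$ for a.e.\ $t$.

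First I would record the two perimeter estimates, exactly as \eqref{eq:2}--\eqref{eq:3} but for the complementary operation. Splitting $\rr^d$ along $\{B,\rr^d\setminus\bar B,\partial B\}$, using that $E\setminus B$ carries no mass inside the open ball $B$, together with \eqref{eq:abscont} and the bound above, one gets $P_K(E\setminus B)\leq P_K(E)-P_K(E;B)+C_1 m'(t)$; and from the identity $P_K(E;B)=P_K(E\cap B)-P_K(E\cap B;\partial B)$ one gets $P_K(E;B)\geq P_K(E\cap B)-C_1 m'(t)$.

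The crux is the volume restoration. Since $\varepsilon<v$ we have $|E\setminus B(x,r)|\geq v-\varepsilon>0$, so the open set $E\setminus\bar B(x,r)$ contains a ball $B(q,\rho)$ with $\rho>0$; this ball lies in every $E_t$ with $t\in[r/2,r]$, so by Remark \ref{remark1} the Deformation Lemma \ref{lem:def} applies to all the $E_t$ with one and the same constant $C_3$. For $\varepsilon$ small enough that $m(t)$ is admissible, it produces a set of volume $(v-m(t))+m(t)=v$ whose $K$-perimeter exceeds $P_K(E_t)$ by at most $C_3 m(t)$; as $E$ is isoperimetric for volume $v$, this yields $P_K(E)\leq P_K(E\setminus B)+C_3 m(t)$. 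Combining the three estimates gives $P_K(E\cap B)\leq 2C_1 m'(t)+C_3 m(t)$, and Theorem \ref{thm:in} (legitimate since $m(t)\leq\varepsilon<v_0$) gives $C_2\, m(t)^{(l-1)/l}\leq 2C_1 m'(t)+C_3 m(t)$. Now $m(t)\leq\varepsilon r^l$ and $r\leq1$ give $C_3 m(t)\leq C_3\varepsilon^{1/l}\,m(t)^{(l-1)/l}$, so choosing $\varepsilon$ with $C_3\varepsilon^{1/l}\leq C_2/2$ produces a differential inequality $C\,m(t)^{(l-1)/l}\leq m'(t)$ of the exact form appearing in Lemma \ref{lem:ext}; assuming $m>0$ on $[r/2,r]$ and integrating $\tfrac{d}{dt}m^{1/l}\geq C/l$ leads to $m(r)^{1/l}\geq Cr/(2l)$, contradicting $m(r)^{1/l}\leq\varepsilon^{1/l}r$ once $\varepsilon$ is small. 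Hence $m(r/2)=0$.

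The main obstacle is precisely the step absent from Lemma \ref{lem:ext}: controlling the restored volume. One must (i) check that Lemma \ref{lem:def} applies to the possibly unbounded set $E\setminus B$, which is guaranteed by the interior ball $B(q,\rho)\subseteq E\setminus\bar B(x,r)$; (ii) make the deformation constant $C_3$ uniform in $t$, which follows from Remark \ref{remark1} since the single ball $B(q,\rho)$ works for all $t\in[r/2,r]$; and (iii) verify that the resulting term $C_3 m(t)$ is of strictly lower order than $C_2\,m(t)^{(l-1)/l}$ so that it can be absorbed, which is exactly where the scaling $m(t)\leq\varepsilon r^l$ together with $r\leq1$ is essential.
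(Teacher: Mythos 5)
Your proposal is correct and follows essentially the same route as the paper: remove the ball, restore the lost volume with the Deformation Lemma at cost $C_3 m(t)$, invoke minimality of $E$ together with the small-volume isoperimetric inequality, absorb the $C_3 m(t)$ term, and integrate the resulting differential inequality on $[r/2,r]$. You in fact make explicit two points the paper leaves implicit, namely the interior ball in $E\setminus \bar B(x,r)$ guaranteeing that Lemma \ref{lem:def} applies with a constant uniform in $t$, and the absorption of $C_3 m(t)$ via $m(t)^{1/l}\leq\varepsilon^{1/l}r$.
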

\begin{proof}
Let  $t>0$, $B:=B(x,t)$ and $m(t)=|E\cap B|$. Under the assumption that $r^{-l}|E\cap B(x,r)|\leq \varepsilon$, $m(t)$ is small enough to use the Deformation Lemma \ref{lem:def}, and define $E_t:=(E\setminus B)^{m(t)}$. It is clear that $|E_t|=|E|$. Thus
\begin{equation*}
P_K(E)\leq P_K(E_t).
\end{equation*}
On the other hand, reasoning as in Lemma \ref{lem:ext} and using the Deformation Lemma, we get
\begin{equation*}
\begin{split}
P_K(E_t)&\leq P_K(E\setminus B)+C_3m(t)\\
&\leq P_K(E)-P_K(E\cap B)+(1+C_1)m'(t)+C_3 m(t)\\
&\leq P_K(E)-m(t)^{l-1/l}+(1+C_1)m'(t)+C_3m(t).
\end{split}
\end{equation*}
The above inequalities gives us
\begin{equation*}
m(t)^{l-1/l}-(1+C_1)m(t)\leq C_3m'(t).
\end{equation*}
For $m(t)$ small enough, there exists $C>0$ such that 
$$
Cm(t)^{l-1/l}\leq m(t)^{l-1/l}-(1+C_1)m(t),$$
and
\begin{equation*}
C'm(t)^{l-1/l}\leq m'(t).
\end{equation*}
Again, supposing that $m(t)>0$ for all $t\in [r/2,r]$, we have
\[
C'\leq \frac{m'(t)}{m(t)^{l-1/l}},
\]
and integrating over $r/2$ and $r$, we get a contradiction for $\varepsilon>0$ small enough.
\end{proof}

Let $E$ be an isoperimetric region, we define the sets
\begin{equation*}
\begin{split}	
E_1&=\{x\in\rr^d : \exists r>0 \ \text{such that }|B(x,r)\setminus E|=0 \}\\
E_0&=\{x\in\rr^d : \exists r>0 \ \text{such that }|E\cap B(x,r)|=0\}\\
S&=\{x\in\rr^d : h(x,r)>\varepsilon \ \forall r\leq 1 \}.
\end{split}
\end{equation*}

\begin{theorem}\label{thm:boundary}
	Let $(\rr^d,\cdot,X,K)$ be a sub-Finsler nilpotent group and let $E$ be a isoperimetric region. Then the topological and essential boundaries of $E$ coincide.
\end{theorem}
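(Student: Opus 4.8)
The plan is to show that both the essential boundary $\partial^* E$ and the topological boundary $\partial E$ coincide with the set $S$, where I read $h(x,r)=r^{-l}\min\{|E\cap B(x,r)|,|B(x,r)\setminus E|\}$. First I would check that $E_0$ and $E_1$ are open and disjoint: if $x\in E_1$ is witnessed by a ball $B(x,r)$ with $|B(x,r)\setminus E|=0$, then $B(y,r/2)\subseteq B(x,r)$ for every $y$ with $d_X(x,y)<r/2$, so $|B(y,r/2)\setminus E|=0$ and $y\in E_1$; the argument for $E_0$ is symmetric. Then the contrapositives of Lemma \ref{lem:ext} and Lemma \ref{lem:int} give the identity $S=\rr^d\setminus(E_0\cup E_1)$: if $x\notin E_1$ then $|B(x,r/2)\setminus E|>0$ for every $r$, so $r^{-l}|B(x,r)\setminus E|>\varepsilon$ for all $0<r\le1$, and likewise $x\notin E_0$ forces $r^{-l}|E\cap B(x,r)|>\varepsilon$, whence $h(x,r)>\varepsilon$ and $x\in S$; conversely, by monotonicity of the balls in $r$, membership in $E_0$ or $E_1$ produces a radius $r\le1$ at which one of the two volumes vanishes, contradicting $x\in S$. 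Thus $\rr^d$ splits into the two open sets $E_0,E_1$ and the closed set $S$.

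Next I would identify $E_1$ and $E_0$ with the density-$1$ and density-$0$ sets, which is the step where the isoperimetric hypothesis is genuinely used. If $E$ has density $1$ at $x$, then using $|B(x,r)|\le\alpha r^l$ from Theorem \ref{thm:balls} (valid for $r\le1$, and center-independent by left invariance) one gets $r^{-l}|B(x,r)\setminus E|\le\alpha\,|B(x,r)|^{-1}|B(x,r)\setminus E|\to0$, so $r^{-l}|B(x,r)\setminus E|\le\varepsilon$ for small $r$ and Lemma \ref{lem:ext} forces $x\in E_1$; the reverse inclusion is trivial since $|B(x,r)\setminus E|=0$ already gives density $1$. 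Symmetrically, Lemma \ref{lem:int} identifies the density-$0$ points with $E_0$. As $\partial^* E$ is by definition the complement of the density-$0$ and density-$1$ sets, this yields $\partial^* E=\rr^d\setminus(E_0\cup E_1)=S$.

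Finally I would compute $\partial E$ using the open representative. By Corollary \ref{cor:open} and Remark \ref{rem:open} I may take $E$ open; since any open set agreeing with $E$ almost everywhere is contained in $E_1$ and conversely, this representative is exactly $E_1$, so $\intt(E)=E_1$. For the exterior I claim $\intt(E^c)=E_0$: if $x\in E_0$ then $|E_1\cap B(x,r)|=0$ for some $r$, and as $E_1$ is open (hence meets $B(x,r)$ in positive measure whenever the intersection is nonempty) this forces $B(x,r)\subseteq E^c$; conversely a ball $B(x,r)\subseteq E^c=\rr^d\setminus E_1$ gives $|E\cap B(x,r)|=0$, i.e. $x\in E_0$. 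Therefore $\partial E=\rr^d\setminus(\intt(E)\cup\intt(E^c))=\rr^d\setminus(E_1\cup E_0)=S$, and together with the previous paragraph $\partial E=S=\partial^* E$.

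The crux is the second step: upgrading the limiting density conditions to the much stronger statement that the measure of $E$ (resp. of its complement) vanishes on an entire ball. This is precisely the content of Lemmas \ref{lem:ext} and \ref{lem:int} and fails for arbitrary finite-perimeter sets; it also explains why the sharp exponent $l$ in the ball estimate of Theorem \ref{thm:balls} is needed, since it is what lets the $r^{-l}$-normalized volumes control the true densities as $r\to0$.
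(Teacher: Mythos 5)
Your proof is correct and follows essentially the same route as the paper: both rest on Lemmas \ref{lem:ext} and \ref{lem:int} to split $\rr^d$ into the open sets $E_0$, $E_1$ and the remaining set $S$, and then identify $S$ with both boundaries of the open representative of $E$. You are in fact more complete than the paper's own two-line argument, which only establishes $\partial E_1=\partial E_0=S$ and leaves implicit the identification of $S$ with the essential boundary, i.e.\ your density computation using the ball-volume bounds of Theorem \ref{thm:balls}.
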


\begin{proof}
 By Lemma \ref{lem:ext} and \ref{lem:int}, the sets $E_0$, $E_1$ and $S$ form a partition of $\rr^d$. Since $E_1$ and $E_0$ are open and disjoint, $\partial E_1\cup \partial E_0\subseteq S$. On the other hand, if $x\in S$ and $r>0$, $B(x,r)\cap E_1\neq\emptyset$ then $B(x,r)\cap E_0\neq\emptyset$, otherwise $x\in \text{int}(E_1)$, and $x\in\partial E_1\cap\partial E_0$.
\end{proof}

\begin{theorem}[Boundedness]\label{thm:bound}
	Any isoperimetric region in a sub-Finsler nilpotent group $(\rr^d,\cdot,X,K)$ is bounded.
\end{theorem}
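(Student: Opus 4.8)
The plan is to show that the volume of $E$ lying outside a large sub-Riemannian ball decays so fast that it must vanish at a finite radius. Normalize so that $E$ is the open representative furnished by Corollary \ref{cor:open} and Remark \ref{rem:open}, and assume $|E|=v>0$, the case $v=0$ being trivial. Since $E$ is open and nonempty, fix once and for all a point $p\in E$ together with a fixed ball $B(p,\rho)\subseteq E$; this ball will host all the volume we reinsert, so that by Remark \ref{remark1} the Deformation Lemma \ref{lem:def} applies with a single constant $C_3$ independent of the radius. Set $m(r):=|E\setminus B(0,r)|$, a non-increasing function with $m(r)\to 0$ as $r\to\infty$. The goal is to prove that $m(R)=0$ for some finite $R$, which forces $E\subseteq \bar B(0,R)$ and hence boundedness.

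For $r$ large enough that $B(p,\rho)\subseteq B(0,r)$ and $m(r)<v_0$, with $v_0$ from Theorem \ref{thm:in}, I would build a competitor of the same volume as $E$ by cutting and refilling. First cut $E$ to $E\cap B(0,r)$, which has finite perimeter for a.e. $r$, and then apply Lemma \ref{lem:def} at the point $p$ to add back the volume $m(r)$, obtaining a set $\tilde E_r:=(E\cap B(0,r))^{m(r)}$ with $|\tilde E_r|=v$ and $P_K(\tilde E_r)\le P_K(E\cap B(0,r))+C_3\,m(r)$. Since $E$ is isoperimetric and $|\tilde E_r|=|E|$, we get $P_K(E)\le P_K(\tilde E_r)$. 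Combining this with the decomposition \eqref{des:decom} applied to $B=B(0,r)$, which gives $P_K(E\cap B(0,r))\le P_K(E)-P_K(E\setminus B(0,r))+2C_1P(E\cap B(0,r);\partial B(0,r))$, and cancelling $P_K(E)$ from both sides, I obtain
\begin{equation*}
P_K(E\setminus B(0,r))\le 2C_1\,P(E\cap B(0,r);\partial B(0,r))+C_3\,m(r).
\end{equation*}

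Now two sharp inputs turn this into a differential inequality for $m$. On the left, the isoperimetric inequality for small volumes \eqref{eq:in} applied to $E\setminus B(0,r)$ gives $C_2\,m(r)^{(l-1)/l}\le P_K(E\setminus B(0,r))$; on the right, the volume-derivative estimate \eqref{eq:volper} with $F=E$ gives $P(E\cap B(0,r);\partial B(0,r))\le -m'(r)$ for a.e. $r$. Hence, for a.e. large $r$,
\begin{equation*}
C_2\,m(r)^{(l-1)/l}\le -2C_1\,m'(r)+C_3\,m(r).
\end{equation*}
Because $m(r)\to 0$, the lower-order term is absorbed: writing $C_3 m(r)=C_3\, m(r)^{1/l}\,m(r)^{(l-1)/l}$ and choosing $r$ large so that $C_3\, m(r)^{1/l}\le C_2/2$, I find $-m'(r)\ge \tfrac{C_2}{4C_1}\,m(r)^{(l-1)/l}$. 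Setting $g:=m^{1/l}$, this reads $g'(r)\le -\tfrac{C_2}{4C_1 l}$ wherever $m>0$, and since $g$ is non-increasing, integrating from a fixed large $r_1$ gives $g(r)-g(r_1)\le -\tfrac{C_2}{4C_1 l}(r-r_1)$. As $g\ge 0$, this is impossible once $r>r_1+\tfrac{4C_1 l}{C_2}g(r_1)$, so $m$ must vanish at some finite $R$. Then $|E\setminus B(0,R)|=0$, and $E$ being open it cannot contain any point outside $\bar B(0,R)$ (such a point would carry a small ball of positive measure in $E\setminus B(0,R)$); hence $E\subseteq \bar B(0,R)$, which is bounded.

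The main obstacle is the uniform control of the deformation: everything hinges on a single constant $C_3$ valid for all large $r$, which is exactly what Remark \ref{remark1} provides by always refilling inside the fixed ball $B(p,\rho)$. The second delicate point is that the decay exponent must be the local dimension $l$ coming from the small-scale ball estimate \eqref{eq:ball}; it is precisely the sharp exponent $(l-1)/l$ that lets the linear term $C_3 m(r)$ be absorbed and the differential inequality close, as anticipated in the remark following Lemma \ref{lem:ext}. The remaining steps—finite perimeter of $E\cap B(0,r)$ for a.e. $r$ and the elementary integration of the differential inequality for a monotone $m$—are routine.
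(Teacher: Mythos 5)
Your proposal is correct and follows essentially the same route as the paper's own proof: cut $E$ with $B(0,r)$, refill the lost volume $m(r)$ via the Deformation Lemma with a uniform constant $C_3$, compare perimeters using the minimality of $E$ together with the decomposition \eqref{des:decom}, the trace estimate \eqref{eq:volper} and the small-volume isoperimetric inequality \eqref{eq:in}, and integrate the resulting differential inequality $C_2 m^{(l-1)/l}\le -2C_1 m' + C_3 m$ after absorbing the linear term. Your explicit appeal to Remark \ref{remark1} to justify a single $C_3$ for all large $r$ is a point the paper leaves implicit, but the argument is the same.
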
	

\begin{proof}
	Let $E$ be an isoperimetric set of volume $v$, $B$ the sub-Riemannian ball centered in $0$ of radius $r>0$, and $m(r)=|E\setminus B|$, and $(E\cap B)^{m(r)}$ be the family given by Lemma \ref{lem:def}. Since $|(E\cap B)^{m(r)}|=v$, we have
	\begin{equation}\label{eq:min}
	P_K(E)\leq P_K((E\cap B)^{m(r)}).
	\end{equation}
	Using the Deformation Lemma \eqref{eq:def}, the Isoperimetric inequality \eqref{eq:in} and Equations \eqref{eq:volper} and \eqref{des:decom}, we get
	\begin{equation}\label{eq:bound}
	\begin{split}
	P_K((E\cap B)^{m(r)})\leq& P_K(E\cap B)+ C_3m(r)\\
	\leq& P_K(E)-P_K(E\setminus B)+2C_1P(E\cap B;\partial B)+C_3m(r)\\
	\leq& P_K(E)-P_K(E\setminus B)-2C_1m'(r)+C_3m(r)\\
	\leq&  P_K(E)-C_2m(r)^{l-1/l}-2C_1m'(r)+C_3m(r).
	\end{split}
	\end{equation}	
	Subtracting Inequality \eqref{eq:min} in \eqref{eq:bound}, we get
	\[
	-2C_1m'(r)\geq C_2m(r)^{l-1/l}-C_3m(r).
	\]
	As $m(r)$ tends to $0$ as $r$ tends to $\infty$, for $r$ big enough there exists $C>0$ such that
	\[
	C_2m(r)^{l-1/l}-C_3m(r)\geq C m(r)^{l-1/l}.
	\]
	Let $r>1$ and suppose that $m(r)>0$. Then
	\begin{equation}\label{eq:bo}
	\frac{-m'(r)}{m(r)^{l-1/l}}\geq C,
	\end{equation}
	Since 
	\begin{equation}\label{eq:bou}
	\int_{1}^{r}\frac{-m'(s)}{m(s)^{l-1/l}}=-\int_{m(1)}^{m(r)}\frac{1}{s^{l-1/l}}ds=m(1)^{1/l}-m(r)^{1/l}.
	\end{equation}
	Integrating \eqref{eq:bo} between $1$ and $r$ and using \eqref{eq:bou}, we get
	\[
	m(1)^{1/l}\geq Cr+m(r)^{1/l}
	\]
	and $r$ is bounded.
\end{proof}

\section{Existence of isoperimetric regions}
Throughout this section,  $K$ shall denote a convex body in $\hhh^0_0$ containing $0$ in its interior and $B(x,r)$ the sub-Riemannian ball centered in $x$ of radius $r>0$.  We shall follow the arguments of Galli and Ritoré \cite{MR2979606}.  

The following lemma can be found in \cite{MR2000099} for Carnot groups, and in the context of sub-Finsler nilpotent groups the proof can be done \textit{mutatis mutandis}.

\begin{lemma}[Concentration Lemma]\label{lem:con}
	Let $F$ be a set with finite perimeter and volume. Suppose that there exists $m\in(0,|B(0,1)|/2)$ such that $|F\cap B(x,1)|<m$ for all $x\in \rr^d$. Then there exists $C>0$ depending only on $l$ such that
	\[
	C|F|^l P_K(F)^{-l}\leq m.
	\]
\end{lemma}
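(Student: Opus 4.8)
The plan is to combine a bounded-overlap covering of $\rr^d$ by unit balls with the relative isoperimetric inequality and an elementary interpolation. The hypothesis $|F\cap B(x,1)|<m<|B(0,1)|/2$ says that $F$ fills less than half of every unit ball, so that the minimum in Theorem \ref{thm:relin} is always attained by $F$ itself; this is the mechanism that converts a purely local smallness condition into a global lower bound for $P_K(F)$.

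First I would build the covering. By left-invariance of the metric and of the Haar measure, $|B(x,r)|=|B(0,r)|$ for every $x$, so all balls of a fixed radius have the same volume. Choose a maximal family of points $\{x_i\}$ for which the balls $B(x_i,1/2)$ are pairwise disjoint; by maximality the balls $B(x_i,1)$ cover $\rr^d$. Fixing the Poincaré dilation constant $\lambda\geq1$, if a point $y$ lies in $N$ of the dilated balls $B(x_i,\lambda)$, then the corresponding disjoint balls $B(x_i,1/2)$ are all contained in $B(y,\lambda+1/2)$; comparing volumes and again using left-invariance gives the uniform overlap bound $N\leq|B(0,\lambda+1/2)|/|B(0,1/2)|$, a constant depending only on $l$ through Theorem \ref{thm:balls}.

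Next I would estimate on each ball. Since $|F\cap B(x_i,1)|<m<|B(0,1)|/2\leq|B(x_i,1)\setminus F|$, the relative isoperimetric inequality of Theorem \ref{thm:relin} (in its localized form, the dilation coming from the Poincaré constant) yields
\[
C\,|F\cap B(x_i,1)|^{(l-1)/l}\leq P_K(F;B(x_i,\lambda)).
\]
Writing $|F\cap B(x_i,1)|=|F\cap B(x_i,1)|^{1/l}\,|F\cap B(x_i,1)|^{(l-1)/l}$ and using $|F\cap B(x_i,1)|^{1/l}<m^{1/l}$, this gives
\[
|F\cap B(x_i,1)|\leq \frac{m^{1/l}}{C}\,P_K(F;B(x_i,\lambda)).
\]
Summing over $i$, bounding $|F|\leq\sum_i|F\cap B(x_i,1)|$ on the left and invoking the overlap bound $\sum_iP_K(F;B(x_i,\lambda))\leq N\,P_K(F)$ on the right, I obtain $|F|\leq (m^{1/l}N/C)\,P_K(F)$. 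Raising to the $l$-th power and rearranging produces $C'|F|^lP_K(F)^{-l}\leq m$ with $C'=(C/N)^l$ depending only on $l$, which is the assertion.

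The main obstacle is the covering step: one needs the overlap constant $N$ to be uniform over all of $\rr^d$ and to depend only on $l$, since otherwise the sum on the right-hand side could diverge. This is exactly where left-invariance (making all balls of equal radius equivalued) together with the growth estimates of Theorem \ref{thm:balls} are indispensable. The interpolation identity and the normalization $m<|B(0,1)|/2$ are the routine ingredients that make the local-to-global passage go through.
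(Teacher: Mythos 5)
Your proof is correct and is essentially the standard covering argument of Leonardi and Rigot that the paper itself invokes without proof (it only cites \cite{MR2000099} and states that the argument carries over \emph{mutatis mutandis}). You are also right to insist on the localized form of the relative isoperimetric inequality, with $P_K(F;B(x_i,\lambda))$ on the right-hand side: as printed, Theorem \ref{thm:relin} has the global perimeter $P_K(F)$ there, which would not permit the summation over the covering, so the relative version is the one actually needed.
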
	
%
%\begin{proof}
%	Let $\mathcal{A}$ be a maximal set of points in $G$ such that for any pair $x$ and $x'$ in $\mathcal{A}$ with $x\neq x'$, $d(x,x')\geq 1/2$ and $|F\cap B(x,1/2)|\geq 1/2$. We claim that
%	\[
%	\Big|F\setminus \bigcup_{x\in\mathcal{A}}B(x,1)\Big|=0.
%	\]
%	Otherwise we can find $y\in G$ such that  
%	\begin{equation}\label{eq:noimp}
%	\Big| B(y,1/2)\bigcap\big(F\setminus \cup_{x\in\mathcal{A}}B(x,1))\Big|>0,
%	\end{equation}
%	but by maximality of $\mathcal{A}$, there exists $x\in\mathcal{A}$ such that $y\in B(x,1/2)$ and thus $B(y,1/2)\subset B(x,1)$ and we get a contradiction with \eqref{eq:noimp}. By hypothesis, $|F\cap B(x,1)|<w_K/2$ and $\min\{|F\cap B(x,1)|,|F\setminus B(x,1)| \}=|F\cap B(x,1)|$. Therefore, using the isoperimetric inequality \eqref{eq:in} we get 
%	\begin{equation}
%	\begin{split}
%	|F|&\leq\sum_{x\in\mathcal{A}}|F\cap B(x,1)|\leq m^{1/l}\sum_{x\in\mathcal{A}}|F\cap B(x,1)|^{l-1/l}\\
%	&Cm^{1/l}\sum_{x\in\mathcal{A}}P(F;B(x,1))\leq \tilde{C}m^{1/l}\sum_{x\in\mathcal{A}}P_K(F;B(x,1)).
%	\end{split}
%	\end{equation}
%	Finally let us see that
%	\[
%	\sum_{x\in\mathcal{A}}P_K(F;B(x,1))\leq CP_K(F),
%	\]
%	where $C$ is a uniform bound for the overlap of the balls $B(x,1)$ in any point $z$. This bound depends on Theorem \ref{}.
%	Let $\mathcal{A}(z)=\{x\in\mathcal{A}: z\in B(x,1)  \}$. Then
%	\[
%	\bigcup_{x\in\mathcal{A}(z)}B(x,1/4)\subset B(z,2),
%	\]
%	and from Theorem \ref{}, we get
%	\[
%	\textit{card}\mathcal{A}(z)
%	\]
%	
%	Let $w_{G,K}=|B_K(0,1)|$. 
%\end{proof}

The following Lemma can be found in \cite{leonardi-ritore}.
\begin{lemma}\label{lem:seq}
	Let $\{E_n\}$ be a sequence of uniformly bounded perimeter sets of volumes $\{v_n\}$ converging to $v>0$. Let $E$ be the limit in $\L^1_{loc}(\rr^d)$ of $E_n$. Then there exists a divergence sequence of radii $\{r_n\}$ such that, setting $F_n=E_n\setminus B(0,r_n)$ and up to a subsequence, it is satisfied
	\begin{equation}
	\begin{split}
	|E|+&\liminf_{n\to\infty}|F_n|=v,\\
	P_K(E)+&\liminf_{n\to\infty}P_K(F_n)\leq \liminf_{n\to\infty}P_K(E_n).
	\end{split}
	\end{equation}
\end{lemma}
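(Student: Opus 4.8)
The plan is to split each $E_n$ into a fixed ``central'' piece $G_n:=E_n\cap B(0,r_n)$, which converges to $E$, and a diverging ``outer'' piece $F_n:=E_n\setminus B(0,r_n)$, and to choose the radii $r_n\to\infty$ so carefully that essentially no volume is lost between the two pieces and no perimeter is created on the cutting spheres $\partial B(0,r_n)$. The whole argument rests on the decomposition \eqref{des:decom} together with the coarea-type bound \eqref{eq:volper}, and on the lower semicontinuity of $P_K$: since for every admissible $U$ the map $F\mapsto\int_F\divv U\,dx$ is continuous under $L^1_{\mathrm{loc}}$ convergence, taking the supremum shows $P_K(E)\le\liminf_n P_K(G_n)$ whenever $G_n\to E$ in $L^1_{\mathrm{loc}}$.

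First I would set $g_n(r):=|E_n\setminus B(0,r)|=v_n-|E_n\cap B(0,r)|$, a non-increasing function with $\int_0^\infty(-g_n'(r))\,dr=v_n$. For fixed $r$ the ball $B(0,r)$ is bounded, so $L^1_{\mathrm{loc}}$ convergence gives $|E_n\cap B(0,r)|\to|E\cap B(0,r)|$, hence $g_n(r)\to g(r):=v-|E\cap B(0,r)|$; moreover $g(r)\downarrow v-|E|$ as $r\to\infty$. The two properties I want from the radii are: (V) $|F_n|=g_n(r_n)\to v-|E|$, which yields $|E|+\liminf_n|F_n|=v$; and (P) $P(E_n\cap B(0,r_n);\partial B(0,r_n))\to 0$, which by \eqref{eq:volper} follows from $-g_n'(r_n)\to 0$.

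The core step is the simultaneous selection of the $r_n$, carried out by a diagonal argument. Since $v_n\le C$ for some constant $C$, the bound \eqref{eq:volper} and Chebyshev's inequality give, on any window $[a,b]$, the estimate $|\{r\in[a,b]:-g_n'(r)>1/j\}|\le Cj$; choosing windows of length $>Cj$ therefore leaves a set of positive measure on which $-g_n'(r)\le 1/j$ and \eqref{eq:volper} holds. For each $j$ I first pick $a_j\uparrow\infty$ with $g(a_j)<v-|E|+1/j$ and set $b_j:=a_j+Cj+1$; by monotonicity $g(r)\in[v-|E|,v-|E|+1/j)$ for all $r\in[a_j,b_j]$. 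Using $g_n(a_j)\to g(a_j)$ and $g_n(b_j)\to g(b_j)$ together with $g_n(b_j)\le g_n(r)\le g_n(a_j)$, there is $N_j$ (increasing in $j$) such that $|g_n(r)-(v-|E|)|<2/j$ for every $r\in[a_j,b_j]$ and every $n\ge N_j$. Setting $j(n):=\max\{j:N_j\le n\}\to\infty$ and choosing $r_n$ in the good subset of $[a_{j(n)},b_{j(n)}]$, I obtain $r_n\to\infty$, $-g_n'(r_n)\to 0$ and $g_n(r_n)\to v-|E|$, so both (V) and (P) hold.

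With these radii the conclusion is immediate. Property (V) gives the volume identity. For the perimeter, \eqref{des:decom} reads
$$P_K(E_n)\ge P_K(G_n)+P_K(F_n)-2C_1\,P(E_n\cap B(0,r_n);\partial B(0,r_n)),$$
the last term tending to $0$ by (P); passing to $\liminf_n$, using the superadditivity of $\liminf$ and the lower semicontinuity $\liminf_n P_K(G_n)\ge P_K(E)$ (valid since $r_n\to\infty$ forces $G_n\to E$ in $L^1_{\mathrm{loc}}$) yields $\liminf_n P_K(E_n)\ge P_K(E)+\liminf_n P_K(F_n)$. I expect the main obstacle to be exactly this coupled radius selection: one must extract radii that simultaneously make the spherical perimeter term negligible and capture all the escaping mass, and reconciling these two requirements is what forces the averaging argument based on \eqref{eq:volper} to be interleaved with the monotone convergence $g_n\to g$ in the diagonal scheme. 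Passing beforehand to a subsequence along which $P_K(E_n)$ converges to its $\liminf$ keeps the bookkeeping clean and matches the ``up to a subsequence'' in the statement.
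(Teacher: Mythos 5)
Your proof is correct and follows essentially the same route as the paper's: both select the cutting radii by an averaging argument over long windows based on \eqref{eq:volper}, so that the spherical term $P(E_n\cap B(0,r_n);\partial B(0,r_n))$ vanishes in the limit, and then conclude from the decomposition \eqref{des:decom} together with lower semicontinuity of $P_K$ under $L^1_{\mathrm{loc}}$ convergence. Your diagonal selection is in fact more careful than the paper's on one point: it also forces $|E_n\cap B(0,r_n)|\to|E|$, which is exactly what upgrades the volume relation to the stated equality and which the paper's proof leaves implicit.
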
	

\begin{proof}
	Take $\{s_n\}$ increasing with $s_n-s_{n+1}>n$. We claim that there exists $r_n$ in $[s_n,s_{n+1}]$ such that 
	$$P(E_n\cap\partial B(0,r_n);\partial B(0,r_n))<v_n/n,$$
	 where $P$ and $B(0,r)$ are the sub-Riemannian perimeter and ball of center $0$ and radius $r$ respectively. Otherwise, by Inequality \eqref{eq:volper} we have
	\begin{equation*}
	v_n<\int_{s_n}^{s_{n+1}}P(E_n\cap\partial B(0,t))dt\leq \int_{s_n}^{s_{n+1}} \frac{d}{ds}|_{s=t}|E_n\cap B(0,s)| dt\leq v_n.
	\end{equation*}
	Therefore, by Inequality \eqref{des:decom} we get
	\begin{multline}\label{eq:pe}
	P_K(E_n)\\
	\begin{split}
	&\geq P_K(E_n\cap B(0,r_n))+P_K(E_n\setminus B(0,r_n))-2C_1P(E_n\cap \partial B(0,r_n);\partial B(0,r_n))\\
	&\geq P_K(E_n\cap B(0,r_n))+P_K(F_n)-2C_1v_n/n.
	\end{split}
	\end{multline}
	On the other hand,
	\begin{equation}\label{eq:vo}
	|E_n|=|E_n\cap B(0,r_n)|+|E_n\setminus B(0,r_n)|.
	\end{equation}
	Taking inferior limits in $n$ in \eqref{eq:pe} and \eqref{eq:vo}, and using the lower semicontinuity, we have the result.
\end{proof}	

\begin{theorem}[Existence of isoperimetric regions]\label{thm:existence}
	Let $(\rr^d,\cdot,X,K)$ be a sub-Finsler nilpotent group. Then, for any $v>0$, there exists a finite perimeter set $E$ such that $|E|=v$ and $I_K(v)=P_K(E)$. Moreover, $E$ has a finite number of connected components.
\end{theorem}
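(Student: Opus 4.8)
The plan is to run a concentration--compactness scheme on a minimizing sequence, in the spirit of Galli--Ritor\'{e}, with the Deformation Lemma~\ref{lem:def} as the engine that controls the mass escaping to infinity. First I would fix $v>0$ and choose $E_n$ with $|E_n|=v$ and $P_K(E_n)\to I_K(v)$, so $P_K(E_n)\le I_K(v)+1$ for large $n$. Reading the Concentration Lemma~\ref{lem:con} in contrapositive form, the bounded quantity $C\,v^{l}P_K(E_n)^{-l}$ (with $C$ the constant of that lemma) stays above a fixed $m_0=m_0(v)>0$, so there are points $x_n$ with $|E_n\cap B(x_n,1)|\ge m_0$. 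Left-translating by $x_n^{-1}$, which preserves $|\cdot|$ and $P_K$ and maps $B(x_n,1)$ to $B(0,1)$, I may assume $|E_n\cap B(0,1)|\ge m_0$. By the compactness of sets of uniformly bounded perimeter, a subsequence converges in $L^1_{loc}$ to some $E$ with $|E\cap B(0,1)|\ge m_0$, hence $|E|\ge m_0>0$, while lower semicontinuity gives $P_K(E)\le I_K(v)$. Lemma~\ref{lem:seq} then yields diverging radii $r_n$ and tails $F_n=E_n\setminus B(0,r_n)$ with $|E|+\liminf_n|F_n|=v$ and $P_K(E)+\liminf_n P_K(F_n)\le I_K(v)$.

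Writing $v_1=|E|$ and $w=v-v_1$, I would next prove that $E$ is isoperimetric for $v_1$. If some $G$ with $|G|=v_1$ satisfied $P_K(G)<P_K(E)$, I would take $G$ bounded with nonempty interior (by approximation); since $F_n$ lies outside $B(0,r_n)$ with $r_n\to\infty$, for large $n$ it is disjoint from $G$, so $P_K(G\cup F_n)=P_K(G)+P_K(F_n)$ and $|G\cup F_n|=v_1+|F_n|\to v$. Correcting the volume to exactly $v$ with the Deformation Lemma applied to $G$ costs $o(1)$ in perimeter, producing for large $n$ a set of volume $v$ with perimeter below $P_K(E)+\liminf_n P_K(F_n)\le I_K(v)$, a contradiction. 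Hence $P_K(E)=I_K(v_1)$; by Theorem~\ref{thm:bound} $E$ is bounded and by Corollary~\ref{cor:open} open, so it has an interior point. Gluing $E$ with a far vertical translate (allowed by~\eqref{eq:tras}) of a bounded near-minimizer for $w$ gives $I_K(v)\le I_K(v_1)+I_K(w)$, which together with the splitting inequality forces the additivity $I_K(v)=I_K(v_1)+I_K(w)$ and identifies $\{F_n\}$ as a minimizing sequence for $w$. Finally, applying the Deformation Lemma to the bounded set $E$ to add the volume $w$ yields $I_K(v)\le I_K(v_1)+C_3 w$, so $I_K(w)\le C_3 w$.

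I would then iterate on the tail $\{F_n\}$, now a minimizing sequence for $w$. The estimate $I_K(w)\le C_3 w$ makes the Concentration Lemma quantitative: the next concentrated piece has volume at least $C\,w^{l}/I_K(w)^{l}\ge C/C_3^{\,l}=:c_0$, a positive constant independent of the stage (here the interior density estimate of Lemma~\ref{lem:ext} guarantees that isoperimetric pieces of volume bounded below contain an interior ball of a fixed radius, so $C_3$ may be chosen uniformly by Remark~\ref{remark1}). Iterating, each concentrated limit $E^{j}$ is again a bounded isoperimetric region, of volume $v_j=|E^{j}|$, and from the second step on $v_j\ge c_0$. Since the $v_j$ are disjoint contributions summing to at most $v$, after at most $1+v/c_0$ steps the remaining mass vanishes: there is no residual loss, and we obtain finitely many bounded isoperimetric regions $E^{1},\dots,E^{N}$ with $\sum_j v_j=v$ and $\sum_j P_K(E^{j})=\sum_j I_K(v_j)=I_K(v)$. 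Translating them vertically to be pairwise disjoint and far apart (perimeter is preserved by~\eqref{eq:tras} and adds over well-separated bounded sets), their union $E$ has $|E|=v$ and $P_K(E)=I_K(v)$, so $E$ is an isoperimetric region.

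For the finiteness of connected components I would use the Deformation Lemma a last time. If $E$ had infinitely many components $C_j$ with $|C_j|\to0$, then for large $j$ the isoperimetric inequality for small volumes (Theorem~\ref{thm:in}) gives $P_K(C_j)\ge C_2|C_j|^{(l-1)/l}$, while deleting $C_j$ and re-adding its volume to a fixed bounded component costs at most $C_3|C_j|$; minimality of $E$ then forces $C_2|C_j|^{(l-1)/l}\le C_3|C_j|$, i.e.\ $|C_j|\ge(C_2/C_3)^{l}$, contradicting $|C_j|\to0$. I expect the main obstacle to be precisely the control of the mass lost at infinity: showing it reassembles into \emph{finitely} many pieces rather than an infinite cascade. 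The Deformation Lemma is what breaks this, as it both bounds the perimeter cost of refilling volume and, through $I_K(w)\le C_3 w$, quantizes the recoverable mass into chunks of size at least $c_0$.
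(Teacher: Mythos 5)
Your proposal is correct in outline and runs the same Galli--Ritor\'e concentration--compactness scheme as the paper: a minimizing sequence, the splitting of Lemma~\ref{lem:seq}, the identification of each concentrated limit as a bounded isoperimetric region (Theorem~\ref{thm:bound}), and the Deformation Lemma~\ref{lem:def} to refill the lost volume. Where you genuinely diverge is in how the induction is shown to terminate. You quantize the recoverable mass: from $I_K(v)\le I_K(v_1)+C_3 w$ you extract a bound $\liminf_n P_K(F_n)\le C_3 w$, feed it into the Concentration Lemma~\ref{lem:con}, and conclude that every piece after the first has volume at least $c_0=C/C_3^{\,l}$, so there are at most $1+v/c_0$ of them. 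The paper instead allows arbitrarily small pieces and excludes an infinite family by a mass-transfer argument: if $v_k\to 0$, deleting the piece $E^k$ saves $C_2 v_k^{(l-1)/l}$ of perimeter by Theorem~\ref{thm:in}, while re-adding its volume to the largest piece costs only $C_3 v_k$, contradicting minimality. Your version is closer to the Leonardi--Rigot principle of recovering a fixed fraction of the volume at each stage and yields an explicit bound on the number of pieces; the paper's is shorter and needs no uniform lower bound on the $v_j$. Your separate deformation argument for the finiteness of connected components is also a useful addition, since the paper only bounds the number of pieces $E^i$ and leaves their identification with components implicit.

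One step you should tighten: you pass through the subadditivity $I_K(v)\le I_K(v_1)+I_K(w)$ and the additivity $I_K(v)=I_K(v_1)+I_K(w)$ before existence is established, which requires a bounded competitor of volume $w$ possessing an interior point; a generic finite-perimeter near-minimizer has neither, so the parenthetical ``by approximation'' must be replaced by an actual truncation argument using \eqref{eq:volper} followed by a volume correction via Lemma~\ref{lem:def}. The detour is avoidable: the only estimate your iteration actually consumes is $\liminf_n P_K(F_n)\le C_3 w$, and this follows directly by combining the splitting inequality $P_K(E)+\liminf_n P_K(F_n)\le I_K(v)$ with $I_K(v)\le P_K(E)+C_3 w$, the latter being the Deformation Lemma applied to the bounded open set $E$ alone.
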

\begin{proof}
	Let $\{E_n\}_{k\in\mathbb{N}}$ be a minimizing sequence of sets with $|E_n|=v$ and $P_K(E_n)\leq I_K(v)+\frac{1}{n}$. By compactness, the sequence converges in $\L^1_{loc}(\rr^d)$ to a set $E^0$. Let $v_0:=|E^0|$.  By Lemma \ref{lem:seq}, we can find a sequence of divergence radii $r_n$ such that, denoting $F_n:=E_n\setminus B(0,r_n)$, we have
	\begin{equation}\label{eq:per}
	\begin{split}
	v_0+&\liminf_{n\to\infty}|F_n|=v,\\
	P_K(E^0)+&\liminf_{n\to\infty}P_K(F_n)\leq I_K(v).
	\end{split}
	\end{equation}
	 If $v_0=v$, then the Theorem is proven.  If $v_0<v$, we claim that $E_0$ is isoperimetric for its volume. Otherwise, we can find $O\subseteq G$ such that $|O|=v_0$ and $P_K(O)< P_K(E^0)$. By Theorem \ref{eq:bound}, $O$ is bounded and by definition of $F_n$, we can find $n_0$ such that $\forall n>n_0$, $O$ and $F_{n}$ are disjoint. Then 
	$$\liminf_{n\to\infty}|O\cup F_n|=|O|+\liminf_{n\to\infty}|F_n|=v.$$
	 By Equation \eqref{eq:per}, 
	\begin{equation*}
	\begin{split}
	I_K(v)&\leq \liminf_{n\to\infty}P_K(O\cup F_{n})\\
	&=P_K(O)+\liminf_{n\to\infty}P_K(F_{n})\\
	&<P_K(E^0)+\liminf_{n\to\infty}P_K(F_{n})\\
	&\leq I_K(v),
	\end{split}
	\end{equation*}
	and we have a contradiction.
	
	Step two. If $v_0<v$ we apply Lemma \ref{lem:con} to find a divergent sequence of points $\{x_n^1 \}$ such that $|F_n\cap B(x_n,1)|\geq m_0|F_n|$. The sets $\ell_{-x_n} F_n$ converge in $\L^1_{loc}(\rr^d)$ to a set $E^1$ of volume $v_1\leq\lim_{n\to\infty}|F_n|= v- v_0$. By Lemma \ref{lem:seq}, we can find a divergent sequence $\{r'_n \}$ of radii so that the sets $F'_n:=(\ell_{-x_n} F_n)\setminus B(0,r'_n)$ verifies
	\begin{equation}
	\begin{split}
	v_1+&\liminf_{n\to\infty}|F'_n|=v-v_0,\\
	P_K(E^1)+&\liminf_{n\to\infty}P_K(F'_n)\leq \liminf_{n\to\infty}P_K(F_n).
	\end{split}
	\end{equation}
	Since $E^0$ in bounded, we can suppose that $E^0\cap E^1=\emptyset$. If $v_1=v-v_0$, then $|E^0\cup E^1|=|E^0|+|E^1|=v$ and 
	$$P_K(E^0\cup E^1)=P_K(E^0)+P_K(E^1)\leq P_K(E^0)+\lim_{n\to\infty} P_K(F_n)\leq I(v).$$
	Thus $E^0\cup E^1$ is the isoperimetric region of volume $v$. If $v_1<v-v_0$, then $E^0\cup E^1$ is isoperimetric for its volume. Otherwise there exists $O\subset G$ such that $|O|=v_0+v_1$ and $P_K(O)<P_K(E^0)+P_K(E^1)$. Then
	\begin{equation*}
	\begin{split}
		I_K(v)&\leq \liminf_{n\to\infty}P_K(O\cup F'_{n})\\
		&=P_K(O)+\liminf_{n\to\infty}P_K(F'_{n})\\
		&<P_K(E^0)+P_K(E^1)+\liminf_{n\to\infty}P_K(F'_{n})\\
		&\leq P_K(E^0)+\liminf_{n\to\infty}P_K(F_n)\\
		&\leq I_K(v),
		\end{split}
	\end{equation*}
	and we have a contradiction. 
%	
%	Suppose that $|E^1|>|E^0|$ and let us see that $|E^1|> (C_2/C_1)^l$, where $C_1$ and $C_2$ are the constants in .  and $|E^0|< (C_2/C_1)^l$. Then, by the Deformation Lemma \ref{} and the Isoperimetric Inequality \ref{}, we get
%	\begin{multline*}
%	I_K(v_0+v_1)\leq P_K(E^1_{|E^0|})\leq P_K(E^1)+C_1 |E^0|\\
%	< P_K(E^1)+C_2|E^0|^{l-1/l}\leq P_K(E^1)+P_K(E^0)=I(v_0+v_1),
%	\end{multline*}
%	which is a contradiction.
	
	By induction, we get a sequence of sets $E^0,\ldots,E^n$ pairwise disjoint of volumes $v_0,\ldots,v_n$ whose union is isoperimetric for its volume $\sum_{i=1}^n v_i$. Suppose that there exists a infinite number of pieces $E^i$. Then $\sum_{i=0}^\infty v_i\leq v$. Let $j$ and $k$ with $v_j\geq v_i$ for all $i$ and $v_k$ small enough so  we can take $(E^j)^{v_k}$ the family defined in \ref{lem:def} and there exists $C>0$ with $Cv_k^{l-1/l}>C_3 v_k$. Then, by the  Deformation Lemma  \eqref{eq:def} and the Isoperimetric Inequality \eqref{thm:in}, we get
	\begin{multline*}
	I_K(\sum_{i} v_i)\leq\sum_{i\neq j,k}P_K(E^i)+P_K((E^j)^{v_k})\leq\sum_{i\neq k} P_K(E^i)+C_3 v_k\\
	<\sum_{i\neq k} P_K(E^i)+Cv_k^{l-1/l}\leq\sum_{i\neq k} P_K(E^i)+P_K(E^k)=I_K(\sum_i v_i),
	\end{multline*}
	which is a contradiction. Therefore there are a finite number of pieces, $r$, until $\sum_{i=1}^r v_i\geq v$, and $E^0\cup E^1 \ldots\cup E^r$ is the isoperimetric region of volume $v$.
\end{proof}

\begin{corollary}\label{cor:sub-add}
	Let $(\rr^d,\cdot,X,K)$ be a sub-Finsler nilpotent group. The isoperimetric profile $I_K$ is sub-additive.
\end{corollary}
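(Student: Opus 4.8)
The plan is to deduce sub-additivity directly from the existence and boundedness of isoperimetric regions already established. Recall that sub-additivity means $I_K(v_1+v_2)\leq I_K(v_1)+I_K(v_2)$ for all $v_1,v_2>0$. First I would fix such $v_1,v_2$ and invoke Theorem \ref{thm:existence} to obtain isoperimetric regions $E_1$ and $E_2$ with $|E_i|=v_i$ and $P_K(E_i)=I_K(v_i)$ for $i=1,2$; by Theorem \ref{thm:bound} both $E_1$ and $E_2$ are bounded.

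The key construction is to place the two regions far apart and take their union. Since $E_1$ and $E_2$ are bounded, there is a point $x\in\rr^d$ for which the left-translate $\ell_x E_2$ has closure disjoint from the closure of $E_1$; in particular the two sets lie at positive distance. Because the Haar measure and the $K$-perimeter are both left-invariant (see the identities following \eqref{eq:div}), we have $|\ell_x E_2|=v_2$ and $P_K(\ell_x E_2)=I_K(v_2)$. Writing $E:=E_1\cup \ell_x E_2$, the two pieces being at positive distance forces the boundary measures $|\partial E_1|_K$ and $|\partial(\ell_x E_2)|_K$ to have disjoint supports, so the representation formula \eqref{reper} yields $P_K(E)=P_K(E_1)+P_K(\ell_x E_2)$, while $|E|=v_1+v_2$. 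By the definition of the isoperimetric profile,
\[
I_K(v_1+v_2)\leq P_K(E)=P_K(E_1)+P_K(\ell_x E_2)=I_K(v_1)+I_K(v_2),
\]
which is the desired inequality.

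The only point requiring care is the additivity $P_K(E_1\cup \ell_x E_2)=P_K(E_1)+P_K(\ell_x E_2)$ for sets separated by a positive distance; this is a locality property of the perimeter measure and follows from \eqref{reper} together with the fact that a horizontal test vector field supported in a neighborhood of one component contributes nothing to the other. Everything else is a routine consequence of the existence theorem, the boundedness theorem, and left-invariance, so I do not expect any genuine obstacle beyond this separation argument.
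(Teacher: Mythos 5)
Your proposal is correct and follows essentially the same route as the paper: take isoperimetric regions for each volume (Theorem \ref{thm:existence}), use boundedness (Theorem \ref{thm:bound}) and left-invariance to translate them to be disjoint, and conclude that the perimeter of the union is the sum of the perimeters. Your added care about separating the sets by a positive distance to justify the additivity of $P_K$ is a reasonable elaboration of the step the paper leaves implicit.
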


\begin{proof}
		Let $v_0,\ldots, v_n\geq 0$. Take $E_k$ isoperimetric region of volume $v_k$. By Theorem \ref{thm:bound}, $E_k$ is bounded and we can take $E_j\cap E_i=\emptyset$. Therefore
 		\[
		I_K(v_0+\ldots+v_n)\leq P_K(\bigcup_{k=1}^n E_k)=\sum_{k=1}^n P_K(E_k)=\sum_{k=1}^n I_K(v_k).\qedhere
		\]	
\end{proof}		
	
The locally Lipschitz property of the isoperimetric profile follows from the existence and the following uniform version of the Deformation Lemma.

\begin{lemma}\label{lem:unifdef}
Let $0<w<v$  and denote by $E_x$ an isoperimetric region of volume $w\leq x\leq v$. There exist $C_3>0$, $\lambda_1>0$ and a family of sets $\{E_x^\lambda: -\lambda_1<\lambda, w\leq x\leq v \}$ such that $|E_x^\lambda|=x+\lambda$ and
\begin{equation}\label{eq:unifdef}
P_K(E_x^\lambda)\leq P_K(E_x)+C_3\lambda.
\end{equation}
\end{lemma}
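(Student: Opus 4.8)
The plan is to obtain the uniform Deformation Lemma by upgrading the pointwise Deformation Lemma \ref{lem:def} with a uniform control on the radius of the interior ball, exploiting the fact that the sets $E_x$ are isoperimetric for $w\le x\le v$. Recall from Remark \ref{remark1} that the constant $C_3$ in \eqref{eq:def} depends only on the radius $r$ of a ball $B(p,r)$ contained in the set being deformed. Thus the entire statement reduces to producing a single radius $r_0>0$, independent of $x\in[w,v]$, and a point $p_x$ with $B(p_x,r_0)\subseteq E_x$; once this is done, Remark \ref{remark1} furnishes a common $C_3$ and the inequality \eqref{eq:unifdef} is exactly \eqref{eq:def} applied to each $E_x$. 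Since every $E_x$ is bounded by Theorem \ref{thm:bound} and contains an interior ball, the finiteness $\lambda_1=+\infty$ from the bounded case of Lemma \ref{lem:def} gives $\lambda_1>0$ (indeed arbitrarily large) uniformly.

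First I would fix, for each $x\in[w,v]$, a point and radius giving $B(p_x,r_x)\subseteq E_x$, which exists because $E_x$ is open up to a nullset by Corollary \ref{cor:open} (and we take the representative $E_1$ as in Remark \ref{rem:open}). The task is to bound $r_x$ away from $0$ uniformly in $x$. The key quantitative tool is Lemma \ref{lem:ext}: there is an $\eps>0$ (depending only on the ambient data, not on $x$) so that whenever $r^{-l}|B(y,r)\setminus E_x|\le\eps$ for some $0<r\le 1$, then $|B(y,r/2)\setminus E_x|=0$, i.e. $B(y,r/2)\subseteq E_x$. So it suffices to find, uniformly in $x$, a point $y$ and a radius $r\le 1$ with $r^{-l}|B(y,r)\setminus E_x|\le\eps$. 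I would obtain such a point by a density/volume argument: since $|E_x|=x\ge w>0$ and the perimeters $P_K(E_x)=I_K(x)$ are uniformly bounded on $[w,v]$ (the profile is non-decreasing by Proposition \ref{lem:non}, or simply continuous, hence bounded on the compact interval), the sets $E_x$ cannot be ``too diffuse'': using the volume lower bound $|E_x|\ge w$ together with the ball volume estimates \eqref{eq:ball} one locates a ball of a fixed radius whose intersection with $E_x$ has density close to $1$, and then Lemma \ref{lem:ext} promotes this to a genuine interior ball of a fixed radius $r_0$.

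The hard part will be making the selection of the uniform radius $r_0$ genuinely independent of $x$. The danger is that as $x$ varies the interior balls of $E_x$ could shrink; ruling this out requires a uniform bound on the perimeters $I_K(x)$ over $[w,v]$ (available since $I_K$ is non-decreasing and finite, and each $I_K(x)$ is attained by Theorem \ref{thm:existence}), and a uniform non-degeneracy estimate. I expect the cleanest route is a contradiction argument: if no uniform $r_0$ existed, there would be a sequence $x_k\in[w,v]$ with interior radii tending to $0$; extracting a convergent $x_k\to x_\infty$ and passing to an $\L^1_{loc}$ limit of (translates of) $E_{x_k}$, the limit would be an isoperimetric region of volume $x_\infty\ge w$ with empty interior, contradicting Corollary \ref{cor:open} together with the density estimate of Lemma \ref{lem:ext}. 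With the uniform $r_0$ in hand, Remark \ref{remark1} yields a single $C_3$, and applying the bounded case of Lemma \ref{lem:def} to each $E_x$ (with $\lambda_1=+\infty$) produces the family $\{E_x^\lambda\}$ satisfying $|E_x^\lambda|=x+\lambda$ and \eqref{eq:unifdef}, completing the proof.
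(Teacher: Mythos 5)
Your proposal follows essentially the same route as the paper's proof: reduce via Remark \ref{remark1} to producing a uniform interior-ball radius $r_0$, argue by contradiction with a sequence $x_n\in[w,v]$ whose interior radii $r_{x_n}$ shrink to $0$, use a concentration argument (Lemma \ref{lem:con} plus the monotonicity of the profile) to place a fixed amount $m>0$ of mass of each $E_{x_n}$ in a unit ball, translate and pass to an $L^1_{loc}$ limit $F$, and apply Lemma \ref{lem:ext} at a density-one point to contradict the shrinking radii. The only loose point in your sketch is the claim that the limit ``would be an isoperimetric region with empty interior'' --- the paper neither needs nor proves that $F$ is isoperimetric (an $L^1_{loc}$ limit of isoperimetric sets need not be one without ruling out mass loss); it simply shows $|F|=0$ via the density argument you also describe, which contradicts $|F\cap B(0,1)|\geq m$.
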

\begin{proof}
	By Remark \ref{remark1}, It is enough to show that there exist $r_0>0$ and $\{p_x\}_{w\leq x\leq v}$ such that $B(p_x,r_0)\subseteq E_x$ for all $w\leq x\leq v$. Let 
	$$r_x:=\sup\{r>0 :B(q_x,r)\subseteq E_x \text{ for some } q_x\in E_x \}.$$
	We shall prove that $\inf_{x\in [w,v]} r_x>0$. 
	Otherwise, there is a sequence $\{x_n\}\subset [w,v]$ such that $r_{x_n}<\frac{1}{n}$. 
	
  We claim that there exists $m>0$ and $p'_x$ such that $|E_{x}\cap B(p'_x,1)|\geq m$ for all $w\leq x\leq v$. Otherwise, By Lemma \ref{lem:con} and Proposition \ref{lem:non},
  \[
	 C^l|E_w|\leq C^l |E_x|\leq \frac{1}{n} P_K(E_x)\leq \frac{1}{n} P_K(E_v).
  \]		
	
 By abuse of notation, we denote as $E_{x_n}$ the set $\ell_{p'_{x_n}}E_{x_n}$. By Proposition \ref{lem:non}, $P_K(E_{x_n})\leq P_K(E_v)$ and, by compactness, there exists a finite perimeter set $F$ such that $E_{x_n}\to F$ in $L^1_{loc}(\rr^d)$. Therefore $|E_{x_n}\cap B(0,1)|\geq m>0$ and by $L^1_{loc}$-convergence, $|F\cap B(0,1)|\geq m>0$.
 
  Finally we will prove that $|F|=0$ to get a contradiction.  Suppose that there exists a point $p$ where the density of $F$ at $p$ is $1$. Then there exists $r>0$ such that
  \[
  2\varepsilon>\frac{|B(p,r)\setminus F |}{|B(p,r)|}\geq Cr^{-l}|B(p,r)\setminus F|.
  \]
  By $L^1_{loc}(\rr^d)$-convergence, $r^{-l}|B(p,r)\setminus E_{x_n}|<\varepsilon$ and by Lemma \ref{lem:ext} and Remark \ref{rem:open},  $B(p,r/2)\subseteq E_{x_n}$, which is a contradiction with $\lim_{n\to \infty}r_{x_n}=0$. 
\end{proof}

\begin{corollary}\label{prop:cont}
	Let $(\rr^d,\cdot,X,K)$ be a sub-Finsler nilpotent group. The isoperimetric profile is a locally Lipschitz function.
\end{corollary}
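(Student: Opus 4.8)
The plan is to obtain the local Lipschitz estimate directly from the existence of isoperimetric regions (Theorem \ref{thm:existence}) combined with the uniform Deformation Lemma \ref{lem:unifdef}, which is the essential ingredient. It suffices to fix an arbitrary compact interval $[w,v]\subset(0,\infty)$ and show that $I_K$ is Lipschitz on $[w,v]$ with a constant that does not depend on the chosen points; since $w$ and $v$ are arbitrary this gives local Lipschitz continuity on $(0,\infty)$. Let $C_3>0$ and $\lambda_1>0$ be the constants furnished by Lemma \ref{lem:unifdef} for the interval $[w,v]$.

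First I would prove the estimate for close volumes. Let $x,y\in[w,v]$ with $0<y-x<\lambda_1$, and write $\lambda:=y-x$. By Theorem \ref{thm:existence} there exist isoperimetric regions $E_x$ and $E_y$ of volumes $x$ and $y$. Deforming $E_x$ upward, Lemma \ref{lem:unifdef} applied to $E_x$ with parameter $\lambda$ yields a set $E_x^\lambda$ with $|E_x^\lambda|=y$ and $P_K(E_x^\lambda)\le P_K(E_x)+C_3\lambda$, so that $I_K(y)\le P_K(E_x^\lambda)\le I_K(x)+C_3\lambda$. Deforming $E_y$ downward, Lemma \ref{lem:unifdef} applied to $E_y$ with parameter $-\lambda$ (admissible since $-\lambda>-\lambda_1$) yields a set $E_y^{-\lambda}$ with $|E_y^{-\lambda}|=x$ and, since the perimeter increment in \eqref{eq:unifdef} is controlled by the magnitude of the volume change as in \eqref{eq:def}, $P_K(E_y^{-\lambda})\le P_K(E_y)+C_3\lambda$, so that $I_K(x)\le I_K(y)+C_3\lambda$. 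Combining the two inequalities gives $|I_K(x)-I_K(y)|\le C_3|x-y|$ whenever $|x-y|<\lambda_1$.

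Finally I would upgrade this to a genuine Lipschitz bound on the whole interval by a standard chaining argument. Given arbitrary $x<y$ in $[w,v]$, choose a partition $x=t_0<t_1<\cdots<t_N=y$ with $t_{i+1}-t_i<\lambda_1$ for each $i$; applying the near-diagonal estimate on each subinterval and using the triangle inequality gives $|I_K(x)-I_K(y)|\le\sum_{i=0}^{N-1}|I_K(t_{i+1})-I_K(t_i)|\le C_3\sum_{i=0}^{N-1}(t_{i+1}-t_i)=C_3|x-y|$. Hence $I_K$ is $C_3$-Lipschitz on $[w,v]$, and therefore locally Lipschitz on $(0,\infty)$.

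The point to get right, rather than a genuine obstacle, is that the deformation must be carried out in both directions with one and the same constant valid for every isoperimetric region whose volume lies in $[w,v]$. This uniformity is precisely what Lemma \ref{lem:unifdef} supplies: through Remark \ref{remark1} it rests on a uniform lower bound for the radius of a ball that can be inscribed in every such isoperimetric region, which is why the Lipschitz constant $C_3$ can be taken independent of $x$ and $y$.
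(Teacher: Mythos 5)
Your proof is correct and follows essentially the same route as the paper: both deform an isoperimetric region of one volume up and one of the other volume down via the uniform Deformation Lemma \ref{lem:unifdef}, obtaining $|I_K(x)-I_K(y)|\le C_3|x-y|$ for nearby volumes. Your explicit chaining step to get a single Lipschitz constant on all of $[w,v]$ is a small addition the paper leaves implicit (local Lipschitz continuity already follows from the near-diagonal estimate), but it is harmless and correct.
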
	
\begin{proof}
	Let $0<w_0<v$,  $E_v$ isoperimetric region of volume $v$ and $\{E_w^\lambda : -\lambda_1<\lambda, w\in [w,v]\}$ the family defined in Lemma \ref{lem:unifdef}. Up to taking $w_0$ closer to $v$, we can assume that $v-w_0<\lambda_1$. Notice that $|E_v^{w-v}|=w$. By \eqref{eq:unifdef}, we get
\begin{equation*}
\begin{split}
I_K(v)&=P_K(E_v)\geq P_K(E_v^{w-v})-C_3(v-w)\geq I_K(w)-C_3(v-w) \\
%	Taking $E_w$ isoperimetric region of volume $w$ and using again \eqref{eq:unifdef}, we obtain
I_K(v)&\leq P_K(E_w^{v-w})\leq P_K(E_w)+C_3(v-w)=I_K(w)+C_3(v-w).
\end{split}
\end{equation*}	
By a similar reasoning for $0<v<w_0$, we get that $I_K$ is locally Lipschitz.
%	 By Lemma \ref{lem:non} and Equation \eqref{eq:ball}, we get
%	$$I_K(v)\leq I_K(w)\leq P_K(E_v)+(w-v)^{l-1/l}=I_K(v)+(w-v)^{l-1/l}.$$
%	If $0<w<v$, take $E_w$ isoperimetric region of volume $w$ and $B$ a sub-Riemannian ball of volume  $v-w$. By Lemma \ref{lem:non} and Equation \eqref{eq:ball}, we get
%	$$I_K(w)\leq I_K(v)\leq P_K(E_w)+(v-w)^{l-1/l}=I_K(w)+(v-w)^{l-1/l}.$$
%	Taking lateral limits as $v$ tends to $w$, we get the continuity of $I_K$.
\end{proof}

%\begin{theorem}
%Let $E$ be an isoperimetric region. Then
%\begin{enumerate}
%	\item $E_1$, $E_0$ and $S$ form a partition of $\real^d$.
%	\item $E_1$ and $E_0$ are open sets.
%	\item $\partial E_1=\partial E_0= S$.
%	\item $E_1$ and $E_0$ are the sets of Lebesgue measure $1$ and $0$ and therefore, the interior and the exterior of $E$ respectively.
%\end{enumerate}
%\end{theorem}

%\bibliography{Sub-Finsler.v1}

\end{document}